\theoremstyle{plain} \numberwithin{equation}{section}
\newtheorem{Theorem}{Theorem}
\newtheorem{Lemma}[Theorem]{Lemma}
\newtheorem{Proposition}[Theorem]{Proposition}
\newtheorem{Remark}[Theorem]{Remark}
\newtheorem{Criterion}[Theorem]{Criterion}
\theoremstyle{remark}
\begin{document}

\title[Divergence of spectral decompositions ]
{Divergence of spectral decompositions of Hill  operators with
two exponential term potentials}

{\author{Plamen Djakov}}


\author{Boris Mityagin}

\address{Sabanci University, Orhanli,
34956 Tuzla, Istanbul, Turkey}
 \email{djakov@sabanciuniv.edu}
\address{Department of Mathematics,
The Ohio State University,
 231 West 18th Ave,
Columbus, OH 43210, USA} \email{mityagin.1@osu.edu}

\begin{abstract}
We consider the Hill operator
$$
Ly = - y^{\prime \prime} + v(x)y, \quad   0 \leq  x \leq \pi,
$$
subject to periodic or antiperiodic boundary conditions ($bc$) with
potentials of the form
$$
v(x) = a e^{-2irx} + b e^{2isx}, \quad a, b \neq 0, \; r,s \in
\mathbb{N}, \; r\neq s.
$$

It is shown that the system of root functions does not contain a
basis in $L^2 ([0,\pi], \mathbb{C})$ if $bc$ are periodic or if $bc$
are antiperiodic and $r, s$ are odd or $r=1$ and $s \geq 3. $
\end{abstract}



\maketitle

{\it Keywords}: Hill operators, periodic and antiperiodic boundary
conditions, two exponential term potentials

 {\it MSC:} 47E05, 34L40, 34L10

\section{Introduction}

We consider the Hill operators $L=L_{Per^\pm}(v)$ with smooth
$\pi$-periodic (complex-valued) potentials $v$
\begin{equation}
\label{1.1}  Ly = - y^{\prime \prime} + v(x)y, \quad   0 \leq  x
\leq \pi,
\end{equation}
subject to periodic ($Per^+$) or antiperiodic ($Per^-$) boundary
conditions:
$$
  Per^\pm : \quad y(\pi) = \pm y(0), \quad y^\prime (\pi) = \pm
y^\prime (0).
  $$
 See basics and details in \cite{MW69}.

If $v $ is real-valued, then $L_{Per^\pm}(v)$
 is a self-adjoint operator with a discrete spectrum.
 The system of its normalized eigenfunctions
   \begin{equation}
  \label{1.2}
  \Phi = \{\varphi_k: \;\;   L \varphi_k = \lambda_k
  \varphi_k, \; \; \|\varphi_k\|=1\}
  \end{equation}
is orthonormal, and the spectral decompositions
  \begin{equation}
  \label{1.3}
  f = \sum_k  \langle  f,\varphi_k  \rangle \varphi_k
  \end{equation}
converge (unconditionally) in $L^2 ([0,\pi])$ for every $f \in L^2
([0,\pi]).$

 If $v$ is a complex-valued potential the picture becomes more
complicated -- see
\cite{Du58,DS71,Ke64,Mi62,Min99,Min06,Sh79,Sh82,Sh83}. In 2006 A.
Makin \cite{Ma06-1,Ma06-2} and the authors \cite[Thm 71]{DM15} gave
the first examples of such potentials that the system of root
functions for periodic or antiperiodic boundary conditions does not
contain a basis in $L^2([0, \pi])$ even though there all but finitely
many eigenvalues are simple.

It is well known that the spectra of the operators $L_{Per^\pm}$ are
discrete, and the following localization formulas hold (see, for
example, \cite[Prop~1]{DM10}):
\begin{equation}
  \label{2.2}
 Sp \, (L_{Per^\pm} ) \subset \Pi_N \cup \bigcup_{n>N, \,
 n\in\Gamma^{\pm}} D_n,    \quad \# \{Sp\,(L_{Per^\pm} )\cap D_n \} =2,
\end{equation}
where $D_n =  \{z: \, |z-n^2| < 1\}, \; \Gamma^{+}= 2\mathbb{N}, \;
\Gamma^{-} =2\mathbb{N} -1, \; N=N(v),$
\begin{equation}
  \label{2.3}
\Pi_N = \{z=x+iy \in \mathbb{C}: \; |x| < (N+1/2)^2, \; |y| < N \}.
\end{equation}
In either case the spectral block decompositions
\begin{equation}
  \label{2.7}
g = S_N g +\sum_{n>N, \,  n\in\Gamma_{\pm}}  P_n  g ,\quad \forall
\,g \in L^2 ([0,\pi]),
\end{equation}
where
\begin{equation}
  \label{2.8}
S_N = \frac{1}{2 \pi i} \int_{\partial \Pi_N} (z-L_{Per^\pm})^{-1}
dz, \quad P_n = \frac{1}{2 \pi i} \int_{\partial D_n}
(z-L_{Per^\pm})^{-1} dz,
\end{equation}
converge unconditionally in $L^2 ([0,\pi]).$   This is true even if
the $\pi$-periodic potential $v$ is singular, i.e., $v \in
H^{-1}_{loc} (\mathbb{R}),$  as A. Savchuk and A. Shkalikov showed
in \cite{SS03}. An alternative proof is given in \cite{DM19}.

The unconditional convergence of decompositions (\ref{2.7}) implies
that for every set $\Delta $ (finite or infinite) of even (or odd)
integers $n>N$ the sum of projections
\begin{equation}
\label{3.1.1} P(\Delta) = \sum_{k\in \Delta} P_k
\end{equation}
converges unconditionally, so the projections $P(\Delta) $ are well
defined and
\begin{equation}
\label{3.1.2} \sup_\Delta \|P(\Delta)\| \leq M(v) < \infty.
\end{equation}
Invariant subspaces $E(\Delta) = Ran \, P(\Delta) $ have $\{P_k, \; k
\in \Delta \}$ as their Riesz system of projections, $ \dim \, P_k =
2. $

Could $P_k $ be split to give a basis of root functions for
$E(\Delta)?$  We put the question in this way because for one and the
same operator $L_{Per^\pm}(v) $ the answer could be {\em yes } and
{\em no} depending on $\Delta. $  For example, if
$$
v(x) = a e^{-10ix} +be^{10ix}
$$
and
\begin{equation}
\label{3.3.1} \Delta_0 = \{n\in \Gamma^\pm: \; \; n \not \equiv 0
\mod 5\},
\end{equation}
then the answer is positive, but for $\Delta_1 = 5 \mathbb{N} $  the
answer is {\em no} if $|a| \neq |b|, $ and {\em yes} if $|a|=|b|.$ We
explain this phenomenon in Section~4 (see Proposition~\ref{prop20}).

In view of (\ref{3.1.1}) and (\ref{3.1.2}), the following holds (see
Corollary 10 in \cite[Section 3]{DM28} for details).
\begin{Remark}
\label{rem1}  If $\Delta $ is an infinite set of even (or odd)
integers, then the corresponding system of periodic (or antiperiodic)
root functions  contains a basis of $E(\Delta)$ if and only if it
contains an unconditional basis of $E(\Delta).$
\end{Remark}

The spectra localization formula (\ref{2.2}) allows us to apply the
Lyapunov--Schmidt projection method (see \cite[Lemma 21]{DM15}) and
reduce the eigenvalue equation $Ly = \lambda y $ to a series of
eigenvalue equations in two-dimensional eigenspaces $E_n^0$ of the
free operator. This leads to the following (see \cite[Section
2.2]{DM15}).

\begin{Lemma}
\label{lem1} Let $L$ be a Hill operator with a potential $v\in L^2.$
 Then, for large enough $n\in \mathbb{N},$
there are functionals $\alpha_n (v;z) $ and $ \beta^\pm_n (v;z), \;
|z| < n $  such that a number $\lambda = n^2 + z, \;|z| < n/4, $ is a
periodic (for even $n$) or anti-periodic (for odd $n$) eigenvalue of
$L$ if and only if $z$ is an eigenvalue of the matrix
\begin{equation}
\label{p1}  \left [
\begin{array}{cc} \alpha_n (v;z)  & \beta^-_n (v;z)
\\ \beta^+_n (v;z) &  \alpha_n (v;z) \end{array}
\right ].
\end{equation}
Moreover, $\alpha_n (z;v) $ and $\beta^\pm_n (z;v)$ depend
analytically on $v$ and $z,$  and $z_n^-=\lambda_n^- - n^2$ and
$z_n^+=\lambda_n^+ - n^2$ are the only solutions of the equation
\begin{equation}
\label{p2}   (z-\alpha_n (v;z))^2=  \beta^-_n (v;z)\beta^+_n (v;z).
\end{equation}
\end{Lemma}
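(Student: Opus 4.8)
The plan is to carry out the Lyapunov--Schmidt reduction announced just before the statement. Write $L = L^0 + V$, where $L^0 y = -y''$ carries the prescribed $bc$ and $V$ is multiplication by $v$. For $n$ of the correct parity, $n^2$ is a double eigenvalue of $L^0$ with eigenspace $E_n^0 = \mathrm{span}\{e^{inx}, e^{-inx}\}$; let $P_n^0$ be the orthogonal projection onto $E_n^0$ and $Q_n^0 = I - P_n^0$. Writing a candidate eigenfunction as $y = u + w$ with $u = P_n^0 y$, $w = Q_n^0 y$, and putting $\lambda = n^2 + z$, the equation $(L-\lambda)y = 0$ splits into
\[
P_n^0\bigl(V(u+w)\bigr) = z\,u, \qquad (L^0 - \lambda)w + Q_n^0 V(u+w) = 0,
\]
since $L^0 = n^2$ on $E_n^0$ and $L^0$ commutes with $Q_n^0$.

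First I would solve the second (``$Q$-'') equation for $w$ in terms of $u$. On $\mathrm{Ran}\,Q_n^0$ the operator $L^0 - \lambda$ is diagonal in the Fourier basis with entries $k^2 - n^2 - z$, $k \neq \pm n$; as the nearest admissible $k$ is $n\pm 2$, we have $|k^2 - n^2| \ge 4n - 4$, so for $|z| < n$ the reduced resolvent $K_n(z) = \bigl((L^0-\lambda)|_{\mathrm{Ran}\,Q_n^0}\bigr)^{-1}$ is defined, and the $Q$-equation reads $\bigl(I + K_n(z) Q_n^0 V\bigr)w = -K_n(z) Q_n^0 V u$. This is the technical heart: I must invert $I + K_n(z) Q_n^0 V$ on $\mathrm{Ran}\,Q_n^0$, uniformly in $z$. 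I expect this to be the main obstacle, and the clean way past it is a Hilbert--Schmidt estimate. The matrix of $K_n(z) Q_n^0 V$ has entries $V_{(k-\ell)/2}/(k^2 - n^2 - z)$, whence
\[
\|K_n(z) Q_n^0 V\|_{HS}^2 \;\le\; C\,\|v\|_{L^2}^2 \sum_{k\neq\pm n}\frac{1}{|k^2-n^2-z|^2} \;=\; O(1/n^2)
\]
uniformly for $|z| < n/4$, using the gap above together with $v \in L^2$. Hence for large $n$ a Neumann series inverts $I + K_n(z) Q_n^0 V$ and yields $w = W_n(z)u$ with $W_n(z) = -\bigl(I + K_n(z) Q_n^0 V\bigr)^{-1} K_n(z) Q_n^0 V$, an operator that (via the convergent series) depends analytically on $z$ and on $v$.

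Substituting $w = W_n(z)u$ into the first (``$P$-'') equation collapses the problem to the two-dimensional identity $\bigl(zI - M_n(z)\bigr)u = 0$ on $E_n^0$, where $M_n(z)$ is the matrix of $P_n^0 V\bigl(I + W_n(z)\bigr)$ in the basis $\{e^{inx}, e^{-inx}\}$. I define $\alpha_n(v;z)$ to be the common diagonal entry of $M_n(z)$, $\beta_n^-(v;z)$ its $(1,2)$ entry and $\beta_n^+(v;z)$ its $(2,1)$ entry; analyticity in $(v,z)$ is inherited from $W_n(z)$. Then $\lambda = n^2 + z$ is an eigenvalue precisely when this homogeneous $2\times2$ system has a nonzero solution, i.e.\ when $\det\bigl(zI - M_n(z)\bigr) = 0$, which is exactly (\ref{p2}); the contraction just established also forces $u = P_n^0 y \neq 0$ for any eigenfunction $y$, securing the ``only if'' direction. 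The one genuinely substantive point here is the equality of the two diagonal entries: expanding $V(I+W_n(z))$ in its Neumann series, the $(1,1)$ and $(2,2)$ entries are sums over Fourier paths $n\to\cdots\to n$ and $-n\to\cdots\to -n$ with weights $\prod_i V_{m_i}\big/\prod_i(k_i^2-n^2-z)$; the reflection $k\mapsto -k$ preserves every denominator and matches the two families bijectively while sending $\prod_i V_{m_i}$ to $\prod_i V_{-m_i}$, and reversing the order of a path (a weight-preserving involution) restores $\prod_i V_{m_i}$, so the two entries agree.

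It remains to count the solutions. By the localization (\ref{2.2}), for $n > N$ the disc $D_n$ contains exactly two eigenvalues of $L_{Per^\pm}$ and no spectrum lies in the region $\{n^2 + z : 1 \le |z| < n/4\}$; combined with the equivalence above, the equation $(z-\alpha_n(v;z))^2 = \beta_n^-(v;z)\beta_n^+(v;z)$ therefore has, in the disc $|z| < n/4$, precisely the two roots $z_n^\pm = \lambda_n^\pm - n^2$. Thus the only step requiring real work is the uniform-in-$z$ resolvent bound of the second paragraph; the splitting, the passage to the determinant, and the final count are bookkeeping or direct appeals to (\ref{2.2}), while the equality of the diagonal entries is the symmetry observation above.
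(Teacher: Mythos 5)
Your proposal is correct and follows essentially the same route the paper takes: Lemma~\ref{lem1} is exactly the Lyapunov--Schmidt reduction onto the two-dimensional unperturbed eigenspace that the authors invoke from their earlier work, with the off-diagonal entries admitting the walk expansion (\ref{2.26})--(\ref{2.27}) that your Neumann series produces. Your Hilbert--Schmidt bound for inverting $I+K_n(z)Q_n^0V$, the reflection-plus-reversal symmetry giving equal diagonal entries, and the appeal to the localization (\ref{2.2}) for the root count are all in line with the cited argument.
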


The functionals $\alpha_n (v;z) $ and $\beta^\pm_n (v;z)$ are well
defined for large enough $n$ by explicit expressions in terms of the
Fourier coefficients of the potential (see \cite[Formulas
(2.16)-(2.33)]{DM15} for Hill operators with $L^2$-potentials).

Here we provide formulas for $\alpha_n (v;z) $ and $\beta^\pm_n
(v;z)$ using the combinatorial approach that has been developed in
\cite{DM11,DM10} and used there to obtain the asymptotics of the
spectral gaps $\gamma_n = \lambda^+_n -\lambda_n^- $ for potentials
of the form $v(x) = a \cos2x + b \cos 4x.$

 For each $n\in \mathbb{N}$  a {\em walk} $x$
from $-n$ to $n$ (or from $n$ to $-n$ or from $n$ to $n$) is defined
through its {\em sequence of steps}
\begin{equation}
  \label{2.21}
x=(x(t))_{t=1}^{\nu+1}, \quad 1\leq \nu=\nu(x)<\infty,
\end{equation}
where $x(t) \in 2 \mathbb{Z} \setminus \{0\},$ and respectively,
\begin{equation}
  \label{2.22}
\sum_{t=1}^{\nu+1} x(t) = 2n   \quad  \left ( \text{or}  \quad
\sum_{t=1}^{\nu+1} x(t) = -2n \quad \text{or}  \quad
\sum_{t=1}^{\nu+1} x(t) = 0 \right ).
\end{equation} A walk $x$ is called {\em
admissible} if its {\em  vertices} $j(t) = j(t,x)$ given,
respectively,  by
\begin{equation}
  \label{2.23}
j(0) = -n  \quad \text{or} \;\;j(0) = +n
\end{equation}
and
\begin{equation}
  \label{2.24}
j(t) =-n + \sum_{i=1}^t x(i) \quad   \text{or}\quad j(t) =
 n + \sum_{i=1}^t x(i), \quad 1\leq t \leq \nu+1,
\end{equation}
satisfy
\begin{equation}
  \label{2.25}
 j(t) \neq \pm n \quad \text{for} \;\; 1\leq t \leq \nu.
\end{equation}

Let
\begin{equation}
\label{22.54}  v= \sum_{m \in 2\mathbb{Z}} V(m) e^{imx}
\end{equation}
be the Fourier expansion of the potential $v$ with respect to the
system $\{e^{imx}, \; m\in 2\mathbb{Z}\},$ and let $X_n, Y_n $ and
$W_n$ be, respectively, the set of all admissible walks from $-n$ to
$n,$  from $n$ to $-n$ and from $n$ to $n.$ For each admissible walk
$x$ we set
\begin{equation}
  \label{2.26}
h_1(x;z) = \prod_{t=1}^\nu [n^2 - j(t)^2 +z]^{-1}, \quad h(x)=
h_1(x) \prod_{t=1}^{\nu+1} V(x(t));
\end{equation}
then
\begin{equation}
  \label{2.27}
\alpha_n (z) = \sum_{x\in W_n}  h(x,z), \;\; \beta_n^+ (z) =
\sum_{x\in X_n} h(x,z), \;\; \beta_n^- (z) = \sum_{x\in Y_n} h(x,z).
\end{equation}

The core of our approach is analysis of asymptotic behavior of the
functionals $\beta_n^\pm (z) =\beta_n^\pm (v;z).$  In particular,
the following criterion (which is a slight modification of Theorem 1
in \cite{DM25} or Theorem~2 in \cite{DM25a}) gives a constructive
approach to determine the basisness properties of the root function
system.

\begin{Criterion}
\label{crit1}
 Let  $v \in L^2 ([0,\pi]), $  and let
 $\Delta \subset \Gamma^+ $  (or
 $\Delta \subset \Gamma^- $) be an infinite set of sufficiently large
 numbers.
If $\Delta= \Delta_0 \cup \Delta_1, $ where
\begin{equation}
\label{a0} \beta_n^+ (z) \equiv  \beta_n^- (z) \equiv 0 \quad
\text{for} \;\; n \in \Delta_0,
\end{equation}
\begin{equation}
\label{a1} \beta_n^+ (0) \neq 0, \quad \beta_n^- (0)\neq 0 \quad
\text{for} \;\; n \in \Delta_1
\end{equation}
and there is a constant $c>0 $ such that
\begin{equation}
\label{a2}  c^{-1}|\beta_n^\pm (0)| \leq |\beta_n^\pm (z)| \leq c \,
|\beta_n^\pm (0)|, \quad \text{for} \;\; n \in \Delta_1, \;\; |z|
\leq 1,
\end{equation}
then:

(a)  for large enough  $n \in \Delta, $ the operator $L_{Per^\pm}(v)$
has in the disc $D_n =\{z: |z- n^2|<1 \} $ exactly one periodic (or
antiperiodic) eigenvalue of geometric multiplicity 2 if $n \in
\Delta_0, $ and exactly two simple periodic (or antiperiodic)
eigenvalues  if $n \in \Delta_1; $

(b)  the system of root functions of $L_{Per^\pm} (v) $ contains a
Riesz basis of $E(\Delta)$ if and only if
\begin{equation}
\label{a3} \limsup_{n\in \Delta_1} t_n (0) <\infty,
\end{equation}
where
\begin{equation}
\label{a4} t_n (z)=\max \{|\beta_n^-(z)|/|\beta_n^+(z)|,
|\beta_n^+(z)|/|\beta_n^-(z)|\}.
\end{equation}
\end{Criterion}
In the framework of this criterion one can explain practically all
known cases of existence or non-existence of bases consisting of root
functions of the operators $L_{Per^\pm} (v)$ for specific classes of
potentials $v$. For example, the main result in \cite{ShV09} follows
from Criterion~\ref{crit1}.

In general form, i.e., without the restrictions (\ref{a0}) -
(\ref{a2}), Criterion~\ref{crit1} is given in \cite{DM26-2} in the
context of 1D Dirac operators but the formulation and proof are the
same in the case of Schr\"odinger operators (see Proposition~19 in
\cite{DM28}). Moreover, the same argument gives the following more
general statement.

\begin{Criterion}
\label{crit2} Let $\Gamma^+ =2\mathbb{N}, $ $\Gamma^-=2\mathbb{N}-1 $
in the case of Hill operators with $H^{-1}_{per}$-potentials, and
$\Gamma^+ =2\mathbb{Z}, $ $\Gamma^-=2\mathbb{Z}-1 $ in the case of
one dimensional Dirac operators with $L^2$-potentials. There exists
$N_* = N_* (v)$ such that for $|n|>N_*$ the operator
$L=L_{Per^\pm}(v)$ has in the disc $D_n =\{z: |z- n^2|<n/2 \}$
(respectively $D_n =\{z: |z- n|<1/2 \}$) exactly two periodic (for
$n\in \Gamma^+$) or antiperiodic (for $n\in \Gamma^-$) eigenvalues,
counted with multiplicity. Let
    $$\mathcal{M}^\pm =\{n\in \Gamma^\pm: \; n
\geq N_*, \; \lambda^-_n \neq \lambda^+_n \}.$$

(a) If $\Delta \subset \Gamma^\pm $ is an infinite set such that $|n|
> N_*$ for $n \in \Delta, $
then the system of periodic (or antiperiodic) root functions contains
a Riesz basis in $E(\Delta)$ if and only if
\begin{equation}
\label{cr11} \limsup_{n\in \Delta \cap \mathcal{M}^\pm} t_n (z_n^*)
< \infty,
\end{equation}
where $z_n^* = \frac{1}{2} (\lambda^-_n + \lambda^+_n) -\lambda^0_n
$ with $\lambda^0_n = n^2 $ for Hill operators and $\lambda^0_n = n
$ for Dirac operators.

(b) The system of root functions of $L_{Per^\pm}(v)$ contains a Riesz
basis, (respectively, in $L^2 ([0,\pi])$ in the Hill case or in $L^2
([0,\pi],\mathbb{C}^2)$ in the Dirac case) if and only if
(\ref{cr11}) holds for $\Delta =\Gamma^\pm.$
\end{Criterion}

Another interesting abstract criterion of basisness is the following.
\begin{Criterion}
\label{crit3}

The system  of  root functions of the operator $L_{Per^\pm} (v)$
contains a Riesz basis in $E(\Delta)$ if only if
\begin{equation}
\label{a6} \limsup_{n\in \Delta \cap \mathcal{M}^\pm} \frac{|
\lambda_n^+ -\mu_n|}{|\lambda_n^+ - \lambda_n^-|} <\infty,
\end{equation}
where (for large enough $n$) $\mu_n $ is the Dirichlet eigenvalue
close to $n^2.$
\end{Criterion}

In the case $\Delta = \Gamma^\pm$ this criterion was given (with
completely different proofs) in \cite{GT12} for Hill operators with
$L^2$-potentials and in \cite{DM28} for Hill operators with
$H^{-1}_{per}$-potentials and for one-dimensional Dirac operators
with $L^2$-potentials as well. The proof of the criterion in the
more general case $\Delta \subset \Gamma^\pm $ is the same.

However, if one wants to apply Criterion~\ref{crit3} to specific
potentials $v,$ say $v(x) = a \cos 2x + b \cos 4x $ with $a,b \in
\mathbb{C},$ it is necessary first  to obtain the asymptotics of the
spectral gaps $|\lambda_n^+ - \lambda_n^-|$ and deviations $|\mu_n -
\lambda_n^+|,$  what is by itself quite a difficult problem.

In \cite{DM25a, DM25} we considered low degree trigonometric
polynomials with nonzero coefficients $v(x) $ of the form

 (i)  $ ae^{-2ix} +be^{2ix}; $

(ii)  $ ae^{-2ix} +Be^{4ix};  $

(iii)  $ ae^{-2ix} +Ae^{-4ix}  + be^{2ix} +Be^{4ix}. $

It is shown that the system of eigenfunctions and (at most finitely many)
associated functions is complete but it is not a basis in $L^2
([0,\pi], \mathbb{C})$ if  $|a| \neq |b| $  in the case (i),  if $|A|
\neq |B| $ and neither $-b^2/4B$ nor $-a^2/4A$ is an integer square
 in the case (iii), and it is never a basis  in the case
(ii) subject to periodic boundary conditions. In connection with
Example (iii) see also \cite{BSWZ12, Ro12}.

In this paper we extend the analysis of the above example (ii) to
potentials of the form
$$
v(x) = a e^{-2irx} + b e^{2isx}, \quad a, b \neq 0, \; r,s \in
\mathbb{N}, \; r\neq s.
$$
In Section~2, Theorem~\ref{thm3.1}, it is shown that the system of
root functions does not contain a basis in $L^2 ([0,\pi],
\mathbb{C}) $ for periodic $bc$ or if $bc$ is antiperiodic but $r,s
$ are odd.

In Section 3, the case $ r=1, \;s>2 $ any (i.e., odd or even) with
antiperiodic boundary conditions is completely analyzed as well, and
it is shown that the system of root functions does not contain a
basis in $L^2 ([0,\pi], \mathbb{C}) $  -- see Theorem~\ref{thm5}.

In our proofs we face series of questions related to enumerative
combinatorics and diophantine equations. Their solution would
dramatically extend the class of trigonometric polynomial potentials
$v(x) $ for which the problem of convergence of spectral
decompositions could be resolved. In our study of potentials (iii) in
\cite{DM11, DM10} we discover a combinatorial identity (see also
\cite{BSWZ12, Ro12}) that could be a prototype of such results. In
this connection see \cite{DM30} for more comments and open problems.

\bigskip

{\em Acknowledgement. } Some of the main results of this paper have
been obtained at the Mathematisches Forschungsinstitut Oberwolfach
during our three week stay there in August 2010 within the Research
in Pairs Programme. We appreciate the hospitality and creative
atmosphere of the Institute.

\section{Two exponential term potentials}

1. Our main objects are the potentials of the form
\begin{equation}
\label{3.3.2} v(x) =a e^{-2Rix} +b
e^{2Six}, \quad a, b \neq 0,
\end{equation}
with $R, S \in \mathbb{N}, \; R \neq S. $
Then
\begin{equation}
\label{3.3.5} R=dr, \quad S=ds, \quad\text{where} \; \; r, s \;\;
\text{are coprime};
\end{equation}
they are the main parameters in what follows.

In view of (\ref{3.3.2}), an admissible path $x=(x(t))_{t=1}^{\nu+1}$
from $-n $ to $n$ gives a non-zero term $h(x,z) $ in $\beta_n^+ (z) $
(see (\ref{2.27})) if and only if
\begin{equation}
\label{3.3.6} x(t) \in \{-2R , 2S \}, \quad t=1, 2, \ldots, \nu+1.
\end{equation}
Let $x$ be such
a path, and let
\begin{equation}
\label{3.4.3} \tilde{p} = \# \{t: \;  x(t) = -2R\}, \quad \tilde{q}
= \# \{t: \;  x(t) = 2S\}.
\end{equation}
Consider
\begin{equation}
\label{3.4.4} n \in \Delta := (rsd) \mathbb{N}, \quad \text{i.e.,}
\;\; n= rsdm, \;\; m \in \mathbb{N};
\end{equation}
then
\begin{equation}
\label{3.4.5}
-2 R \tilde{p} + 2S \tilde{q} = 2n, \quad s \tilde{q} = r \tilde{p} + rsm.
\end{equation}
and therefore,
\begin{equation}
\label{3.4.6}
\tilde{q} = r q, \quad \tilde{p} = sp \quad \text{with} \;\; q= p+m.
\end{equation}
Under the assumptions (\ref{3.4.3}) - (\ref{3.4.6}) we denote by
$X_n(p) $ the set of all admissible paths from $-n$ to $n$ with
$\tilde{p}= ps $ negative steps $-2R$ and $\tilde{q}= qr $ positive
steps $2S.$ Then $n \in \Delta $  (see (\ref{3.4.4}) ) implies
\begin{equation}
\label{3.5.1} \# X_n(0) = 1, \quad X_n(0) = \{ x^* \},
\end{equation}
where
\begin{equation}
\label{3.5.2} x^* (k)= 2sd, \quad    j^*_k  := j(k,x^*) = -n+ 2sdk,
\quad 1 \leq k \leq rm-1.
\end{equation}
Therefore, for $n= rsdm$ we have $n^2 - (j^*_k)^2= 4s^2d^2k (rm-k), $
which implies that
\begin{equation}
\label{3.5.20} h(x^*,0)=b^{rm} \prod_{k=1}^{rm-1} \frac{1}{n^2 -
(j^*_k)^2}= \frac{b^{rm}}{(4s^2d^2)^{rm-1} [(rm-1)!]^2}.
\end{equation}

Moreover, in these notations, we have
\begin{equation}
\label{3.5.3} \beta_n^+ (z) = \sum_{p=0}^\infty \sum_{x\in X_n(p)}
h(x,z),
\end{equation}
where, for $x \in X_n(p),$
\begin{equation}
\label{3.5.4}
 h(x,z)= a^{\tilde{p}} b^{\tilde{q}} h_1 (x,z), \quad
 h_1 (x,z) =
  \prod_{t=1}^{\tilde{p} + \tilde{q} -1}
 \left ( n^2 - j(t,x)^2 +z  \right )^{-1}.
\end{equation}

2. Next we show that the leading term in the asymptotics of
$\beta_n^+ (z) $ is determined by $h(x^*, 0) $ only. Fix $p \geq 1 $
and $x\in X_n(p); $  choose a set of vertices $j(t_k, x), \; k=1,
\ldots rm-1 $ so that
\begin{equation}
\label{3.5.5}
0 \leq \delta_k := j^*_k - j(t_k, x) < 2S = 2sd.
\end{equation}
(This is possible since the positive steps of $x$ are equal to $2S.$)

We have $h_1 (x,z) = \Pi_1 (z) \cdot \Pi_2 (z),  $ where
$$
\Pi_1 (z)= \prod_{k=1}^{rm-1} (n^2 - j(t_k,x)^2+z)^{-1}
$$
and $\Pi_2 (z) $ is the product of those factors of $h_1 (x,z) $
which are not included in $\Pi_1 (z). $ In view of (\ref{3.4.3}) and
(\ref{3.4.6}), the number of factors in $\Pi_2 (z) $ is equal to
$$\nu (x) - (rm-1) = \tilde{p} +\tilde{q}-1  - (rm-1) = (r+s)p.  $$
For $n\geq 2 $ and $|z|\leq 1 $ we have $$|n^2 - j(t_k,x)^2+z| \geq
|n^2 - j(t_k,x)^2| -1 \geq 2n-2 \geq n, $$ so the absolute value of
each factor is less than $1/n.  $ Therefore,
\begin{equation}
\label{3.6.0} |\Pi_2 (z)| \leq (1/n)^{(r+s)p}.
\end{equation}

To estimate $\Pi_1 (z) $ we need the following (compare with
\cite[Lemma~2]{DM25}).

\begin{Lemma}
 \label{lem3.0}
 If $\{j_1, \ldots, j_K\} \subset \{j= -n+2t, \; t=1, \ldots,
n-1\},$ then for large enough $n$
 and $|z|\leq 1 $
 \begin{equation}
  \label{3.380}
\prod_{k=1}^K |n^2-j_k^2 +z|^{-1} = \left (\prod_{k=1}^K
|n^2-j_k^2|^{-1} \right ) (1+ \theta_n),   \quad
 |\theta_n | \leq \frac{4\log n}{n}.
  \end{equation}
  \end{Lemma}

\begin{proof} Indeed, we have
$$ \theta_n =   \prod_{k=1}^K \frac{n^2-(j_k)^2}{n^2-(j_k)^2 +z}
-1 = e^{-w_n} -1, $$ where $w_n = \sum_{k=1}^K \log \left (1+
\frac{z}{n^2 -(j_k)^2} \right ).$  Therefore,
 by the inequality $$ |\log
(1+\zeta)| \leq \sum_{k=1}^\infty |\zeta|^k \leq 2 |\zeta|  \quad
\text{for} \;\; |\zeta|\leq 1/2, $$  it follows that for large enough
$n$ $$ |w_n| \leq \sum_{k=1}^K \frac{2|z|}{n^2 -(j_k)^2} \leq
\sum_{k=1}^{n-1} \frac{2}{n^2 - (-n+2k)^2 } = \frac{1}{n}
 \sum_{k=1}^{n-1} \frac{1}{k} \leq \frac{2\log n}{n}< \frac{1}{2}. $$
On the other hand, if $|w|\leq 1/2$ then $|e^{-w} -1| \leq
\sum_{k=1}^\infty |w|^k \leq 2|w|,$ which implies  (\ref{3.380}).
\end{proof}

Now we could estimate the product $\Pi_1 (z)$ by Lemma~\ref{lem3.0}.
Indeed, if $j_k =j(t_k, x) $ then due to the choice of $t_k$ (see
(\ref{3.5.5})) the vertices $j_k $ are distinct and $ -n <j_k <n. $
Therefore, (\ref{3.380}) implies that
 \begin{equation}
  \label{3.38}
 \Pi_1 (z)  =  \Pi_1 (0) (1+ \theta_n),   \quad \text{where}\;\;
 |\theta_n| =O \left (  \frac{ \log n}{n} \right ).
  \end{equation}

 3. Next we estimate $\Pi_1 (0) $ by comparing it with $h_1
(x^*,0).$ To this end we need the following.
\begin{Lemma}
\label{lem3.1} Let $n, K, S \in \mathbb{N} $ and $n \geq (K+1) S, $
and let
\begin{equation}
\label{3.6.1}
j_k = \pm (n - 2kS), \quad  0 \leq  \delta_k \leq 2(S-d),
\quad k = 1, \ldots, K, \quad  d \in (0, S).
\end{equation}
Then
\begin{equation}
\label{3.6.2}
\prod_{k=1}^K \frac{n^2 - (j_k)^2 }{n^2 -(j_k - \delta_k)^2}
\leq C n^{1- d/S}.
\end{equation}
\end{Lemma}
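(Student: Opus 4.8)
The plan is to reduce the product to an explicit ratio of Gamma functions. First I would factor each term. Since $k\le K<n/S$, we have $n^2-j_k^2=n^2-(n-2kS)^2=(2kS)(2n-2kS)>0$, while distinguishing the two signs of $j_k=\pm(n-2kS)$ gives
\[
n^2-(j_k-\delta_k)^2=(2kS\pm\delta_k)(2n-2kS\mp\delta_k).
\]
Hence the $k$-th ratio splits into a product of two factors, one of which is $\le 1$ and one of which is $\ge 1$; all the growth of the full product is carried by the factor that is $\ge 1$. The constraints $0\le\delta_k\le 2(S-d)$ and $n\ge(K+1)S$ guarantee that every factor is positive.

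In the sign case $j_k=-n+2kS$ relevant to the application (cf.\ (\ref{3.5.5}), where $j(t_k)=j_k^*-\delta_k$), the factor $\frac{2n-2kS}{2n-2kS+\delta_k}\le1$ is harmless, and using $\delta_k\le 2(S-d)$,
\[
\prod_{k=1}^K\frac{2kS}{2kS-\delta_k}\le\prod_{k=1}^K\frac{kS}{(k-1)S+d}=\prod_{k=1}^K\frac{k}{k-1+\alpha},\qquad \alpha:=d/S\in(0,1).
\]
This telescopes into $\Gamma(\alpha)\,\Gamma(K+1)/\Gamma(K+\alpha)$, which by Gautschi's inequality $\Gamma(x+1)/\Gamma(x+\alpha)\le(x+1)^{1-\alpha}$ is at most $\Gamma(\alpha)(K+1)^{1-\alpha}$. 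Since $n\ge(K+1)S$ yields $K+1\le n/S$, this is $\le\Gamma(\alpha)(n/S)^{1-\alpha}=Cn^{1-d/S}$, the desired bound with $C=\Gamma(d/S)\,S^{d/S-1}$.

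For the opposite sign $j_k=n-2kS$ the growing factor is $\prod_k\frac{2n-2kS}{2n-2kS-\delta_k}$. Here I would reindex by $l=K+1-k$ and set $\rho=n-(K+1)S\ge0$, $\beta=\rho/S$, so that $2n-2kS=2\rho+2lS$ and the product is bounded by
\[
\prod_{l=1}^K\frac{\beta+l}{\beta+l-1+\alpha}=\frac{\Gamma(\beta+\alpha)}{\Gamma(\beta+1)}\cdot\frac{\Gamma(\beta+K+1)}{\Gamma(\beta+K+\alpha)}.
\]
The second factor is $\le(\beta+K+1)^{1-\alpha}=(n/S)^{1-\alpha}$ by Gautschi's inequality together with $\beta+K+1=n/S$, while the first factor $\Gamma(\beta+\alpha)/\Gamma(\beta+1)$ is decreasing in $\beta\ge0$ (its logarithmic derivative $\psi(\beta+\alpha)-\psi(\beta+1)$ is negative) and so is $\le\Gamma(\alpha)$. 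This gives the same bound.

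I expect the $+$ sign case to be the main obstacle. In the $-$ sign case one may simply discard the $\le1$ factor, but in the $+$ sign case discarding either single factor is too lossy (it leaves an $O(n)$ estimate), so one must retain both factors and extract the cancellation via the reindexing, and then verify that the extra prefactor $\Gamma(\beta+\alpha)/\Gamma(\beta+1)$ stays bounded uniformly in $\beta\ge0$. The remaining care is bookkeeping: confirming positivity of all factors from $n\ge(K+1)S$ and $\delta_k\le2(S-d)$, so that the resulting constant $C$ depends only on $d/S$ and is uniform in both $n$ and $K$.
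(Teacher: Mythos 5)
Your proof is correct and follows essentially the same route as the paper: in each sign case you discard the factor that is $\leq 1$, bound the remaining factor termwise using $\delta_k \leq 2(S-d)$ and $2n-2kS \geq 2(K+1-k)S$, and recognize the resulting product as $O(K^{1-d/S}) = O(n^{1-d/S})$. The only difference is cosmetic: you make the telescoping explicit via Gamma functions and Gautschi's inequality, where the paper simply asserts $\prod_{k=1}^K (1-\gamma/k)^{-1} \leq C K^\gamma$ (and note that, contrary to your closing remark, your own computation in the $+$ sign case does discard the single factor $\frac{2kS}{2kS+\delta_k}\leq 1$, which is indeed sufficient).
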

(This lemma is a more general assertion than Lemma 12 in \cite{DM25},
where $S=2$ and $\delta_k=2$ so $d=1.$)

\begin{proof}
First we consider the case $j_k = -(n-2kS),$ i.e., moving forward
from $-n$ to $+n.$  Then $ n-j_k = 2n - 2kS  \geq 2S, $ and we have
$$
\frac{n^2 - (j_k)^2 }{n^2 -(j_k - \delta_k)^2}
=\frac{(n + j_k)(n - j_k) }{(n +j_k - \delta_k)(n -j_k + \delta_k)}
\leq \frac{n + j_k}{n + j_k - \delta_k}.
$$

If $j_k = -n + 2Sk, $ then
$$
\frac{n + j_k}{n + j_k - \delta_k}
= \left ( 1- \frac{\delta_k}{2kS}  \right )^{-1} \leq
\left ( 1- \frac{S-d}{kS}  \right )^{-1}.
$$
Therefore, the product in (\ref{3.6.2}) does not exceed
$$
\prod_{k=1}^K \left ( 1- \frac{\gamma}{k}  \right )^{-1} \leq C n^\gamma
\quad \text{where} \;\; \gamma = 1- \frac{d}{S}, \;\; C= C(\gamma).
$$

When we are moving backward from $+n$ to $-n,$ then  $j_k = n - 2Sk,
$ so
$$
\frac{n + j_k}{n + j_k - \delta_k}
= \left ( 1- \frac{\delta_k}{2n - 2kS}  \right )^{-1} \leq
\left ( 1- \frac{(S-d)}{(K+1-k)S}  \right )^{-1}.
$$
Therefore, the product in (\ref{3.6.2}) does not exceed
$$
\prod_{k=1}^K \left ( 1- \frac{\gamma}{K+1-k}  \right )^{-1} \leq C n^\gamma
\quad \text{where} \;\; \gamma = 1- \frac{d}{S}, \;\; C= C(\gamma),
$$
which completes the proof.

\end{proof}

4. By Lemma \ref{lem3.0}, $|\Pi_1 (z)/\Pi_1 (0)| = 1+ O \left ((\log
n)/n \right ).$ On the other hand,  applying Lemma~\ref{lem3.1} to
$\Pi_1 (0)/h_1 (x^*,0) $ we obtain (since $S=sd$)
$$
\Pi_1 (0) \leq C n^{1-1/s} h_1 (x^*,0).
$$
Together with the estimate (\ref{3.6.0}) for $\Pi_2, $
this leads to
$$
|h_1 (x,z) |/h_1 (x^*,0) \leq  C n^{1-1/s} (1/n)^{(s+r)p}.
$$

Let us take into account the coefficients $a, b $ of the potential.
We set
\begin{equation}
\label{3.8.4} T= \max \{  |a|, |b| \};
\end{equation}
then
\begin{equation}
\label{3.9.1} \frac{|h(x,z)|}{|h(x^*,0)|} = \frac{|a|^{\tilde{p}}
|b|^{\tilde{q}} |h_1 (x,z)|} {|b|^{rm} h_1 (x^*,0)} \leq C n^{1-1/s}
\left (\frac{T}{n} \right )^{(s+r)p},
\end{equation}
because $\tilde{p} + \tilde{q} - rm = (r+s) p $ due to (\ref{3.4.3})
and (\ref{3.4.6}). \bigskip

5. The number of  paths $x \in X_n(p) $ does not exceed
\begin{equation}
\label{3.10.1} \# X_n(p) \leq \begin{pmatrix} \tilde{p} + \tilde{q}\\
\tilde{p}  \end{pmatrix}.
\end{equation}
In view of (\ref{3.4.6}),
\begin{equation}
\label{3.10.2} \# X_n(p) \leq \begin{pmatrix} (s+r)p + rm \\ sp
\end{pmatrix} \leq
\begin{cases}
\frac{1}{(sp)!}  [(s+2r)m]^{sp}   \quad  & \text{if} \;\; p\leq m,\\
2^{(s+2r)p}       \quad  & \text{if} \;\; p>m.
\end{cases}
\end{equation}
By (\ref{3.9.1}) and (\ref{3.10.2}), it follows that
\begin{equation}
\label{3.11.1} \sum_{p=1}^\infty \sum_{x \in X_n(p) } |h(x,z)| \leq
|h(x^*,0)| ( \sigma_1 + \sigma_2),
\end{equation}
where
$$
\sigma_1 = C n^{1-\frac{1}{s}} \sum_{p=1}^m  \frac{1}{(sp)!}
[(s+2r)m]^{sp}
 \left (\frac{T}{n} \right )^{(s+r)p},
$$
$$
\sigma_2 = C n^{1-\frac{1}{s}}  \sum_{p=m+1}^\infty  2^{(s+2r)p}
 \left (\frac{T}{n} \right )^{(s+r)p}.
$$
Since $ n= rsdm $ we have
$$
[(s+2r)m]^{sp} \left (\frac{T}{n} \right )^{(s+r)p} =
\left ( T\frac{s+2r}{rsd}   \right )^{sp} \left (\frac{T}{n} \right )^{rp}
=\left (\frac{T_1}{n} \right )^{rp}
$$
where $T_1 = T \left ( T \frac{s+2r}{rsd}   \right )^{s/r}.$
Therefore, for $n \geq 2T_1 + 1, $
$$
\sigma_1 = C n^{1-\frac{1}{s}}  \sum_{p=1}^m  (T_1/n)^{rp} \leq 2 C
n^{1-\frac{1}{s}}  (T_1/n)^{r} \leq C_1 n^{1- r -\frac{1}{s} },
$$
where $C_1=  C_1 (r,s, T). $

The second sum $\sigma_2 $ is much smaller than the first one:
$$
\sigma_2 \leq C n^{1-1/s} \sum_{p=m+1}^\infty \left (\frac{4T}{n}
\right )^{(s+r)p} \leq C_2 n^{1-(s+r)(m+1)-\frac{1}{s}},
$$
where $C_2=  C_2 (r,s, T). $

In view of (\ref{3.11.1}), the obtained estimates for $\sigma_1 $
and $\sigma_2 $ prove that
\begin{equation}
\label{3.13.2} \sum_{p=1}^\infty \sum_{x \in X(p) } |h(x,z)| \leq C
(r,s,T) |h(x^*,0)| \, n^{1- r -\frac{1}{s} }, \quad |z|\leq 1.
\end{equation}
Hence, the following is true.
\begin{Lemma}
\label{lem3.2} For large enough $n= mdsr, \; m \in \mathbb{N},$
\begin{equation}
\label{3.13.31} \frac{1}{2} \beta^+_n (0) \leq |\beta^+_n (z)| \leq
 2\beta^+_n (0)
\end{equation}
and
\begin{equation}
\label{3.13.3} \beta_n^+ (0) =
 h(x^*,0) \left ( 1+ O  \left( n^{1- r -\frac{1}{s}}  \right  )  \right )
\end{equation}
with
\begin{equation}
\label{3.13.4} h(x^*,0) = 4s^2d^2 \left ( \frac{b}{4s^2d^2} \right
)^{rm} ((rm-1)!)^{-2}.
\end{equation}

\end{Lemma}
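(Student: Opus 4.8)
The plan is to isolate the $p=0$ term in the series (\ref{3.5.3}) and treat everything else as a controlled remainder. Since $X_n(0)=\{x^*\}$ by (\ref{3.5.1}), I would write
$$
\beta_n^+(z) = h(x^*,z) + R_n(z), \qquad R_n(z) := \sum_{p=1}^\infty \sum_{x\in X_n(p)} h(x,z),
$$
and show that $h(x^*,z)$ is the leading term while $R_n(z)$ is negligibly small relative to it.

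First I would pin down $h(x^*,z)$. The vertices of $x^*$ are $j_k^*=-n+2sdk$, $1\le k\le rm-1$ (see (\ref{3.5.2})), which form a subset of $\{-n+2t:\ t=1,\dots,n-1\}$; hence Lemma~\ref{lem3.0} applies and gives $h(x^*,z)=h(x^*,0)\,(1+\theta_n)$ with $|\theta_n|=O((\log n)/n)$, uniformly for $|z|\le 1$. The explicit value of $h(x^*,0)$ is already recorded in (\ref{3.5.20}), and (\ref{3.13.4}) is obtained from it by the purely algebraic rearrangement $\tfrac{b^{rm}}{(4s^2d^2)^{rm-1}[(rm-1)!]^2}=4s^2d^2\,\big(\tfrac{b}{4s^2d^2}\big)^{rm}[(rm-1)!]^{-2}$.

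Next I would invoke the key estimate (\ref{3.13.2}), which has already been proved and bounds the remainder by $|R_n(z)|\le C(r,s,T)\,|h(x^*,0)|\,n^{1-r-1/s}$ for $|z|\le 1$. Since $r\ge 1$ the exponent satisfies $1-r-\tfrac1s\le -\tfrac1s<0$, so both correction terms --- the $\theta_n$ from Lemma~\ref{lem3.0} and the remainder $R_n$ --- are $o(|h(x^*,0)|)$. Setting $z=0$ gives $\beta_n^+(0)=h(x^*,0)+R_n(0)=h(x^*,0)\big(1+O(n^{1-r-1/s})\big)$, which is (\ref{3.13.3}); in particular $|\beta_n^+(0)|=|h(x^*,0)|(1+o(1))$.

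Finally, for the two-sided bound (\ref{3.13.31}) (read with moduli, as in (\ref{a2})), I would combine the two displays: for large $n$ and $|z|\le 1$,
$$
\big|\,|\beta_n^+(z)|-|h(x^*,0)|\,\big| \le |h(x^*,0)|\,|\theta_n| + |R_n(z)| \le \tfrac12\,|h(x^*,0)|,
$$
so $\tfrac12|h(x^*,0)|\le|\beta_n^+(z)|\le \tfrac32|h(x^*,0)|$; substituting $|h(x^*,0)|=|\beta_n^+(0)|(1+o(1))$ yields $\tfrac12|\beta_n^+(0)|\le|\beta_n^+(z)|\le 2|\beta_n^+(0)|$. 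I do not expect a genuine obstacle at this stage: the real work is the combinatorial/analytic estimate (\ref{3.13.2}), and once it is available the lemma is a matter of assembling the pieces. The only points requiring care are the uniformity of the $O$-terms over the disc $|z|\le 1$ and the observation that $1-r-\tfrac1s<0$ forces the errors to decay, which is what lets the single path $x^*$ dominate.
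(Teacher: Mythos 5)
Your proposal is correct and follows essentially the same route as the paper: the estimate \eqref{3.13.2} is proved immediately before the lemma is stated and the paper then assembles the pieces exactly as you do, with Lemma~\ref{lem3.0} supplying the $1+O((\log n)/n)$ comparison for the single path $x^*$ and the observation $1-r-\tfrac1s<0$ making the remainder negligible. The one (shared, harmless) informality is that \eqref{3.13.31} must be read with $|\beta_n^+(0)|$ on both sides for complex $b$, as you note.
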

\bigskip

6. To analyze the paths $y \in Y_n $ from $n$ to $-n, $ i.e.,
\begin{equation}
\label{3.14.1}
\sum_1^{\nu + 1} y(t) = -2n,
\end{equation}
we can just exchange the roles of $R$ and $S$ and repeat the above
statements with proper adjustments. Then
$$
Y_n(0) = \{y^* \}, \quad y^* (t) = - 2R, \;\; 1 \leq t \leq sm-1,
$$
and the following holds.
\begin{Lemma}
\label{lem3.3} For large enough $n= mdsr, \; m \in \mathbb{N},$
\begin{equation}
\label{3.14.31} \frac{1}{2} \beta^-_n (0) \leq |\beta^-_n (z)| \leq
 2\beta^-_n (0)
\end{equation}
and
\begin{equation}
\label{3.14.3} \beta_n^- (0) =
 h(y^*,0) \left ( 1+ O  \left( n^{1- s -\frac{1}{r}}  \right  )  \right )
\end{equation}
with
\begin{equation}
\label{3.14.4} h(y^*,0) = 4r^2d^2 \left ( \frac{a}{4r^2d^2} \right
)^{sm} ((sm-1)!)^{-2}.
\end{equation}
\end{Lemma}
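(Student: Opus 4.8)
The plan is to derive Lemma~\ref{lem3.3} as the exact mirror image of Lemma~\ref{lem3.2}: the sum $\beta_n^-(z)=\sum_{p'\ge0}\sum_{y\in Y_n(p')}h(y,z)$ has the same structure as $\beta_n^+(z)$, so the estimates of Parts~2--5 apply verbatim once the dominant step $2S$ is replaced by the dominant step $2R$ and the parameters are relabelled $r\leftrightarrow s$, $a\leftrightarrow b$. Concretely, a walk $y\in Y_n$ uses steps in $\{-2R,2S\}$ with $\sum_t y(t)=-2n$; setting $\tilde p=\#\{t:y(t)=-2R\}$ and $\tilde q=\#\{t:y(t)=2S\}$, the relation $r\tilde p-s\tilde q=rsm$ together with $\gcd(r,s)=1$ forces
\[
\tilde p=s(p'+m),\qquad \tilde q=rp',\qquad p'\ge0,
\]
so the index $p$ and the up-steps $2S$ in the analysis of $\beta_n^+$ are now played by $p'$ and the down-steps $-2R$ (here $Y_n(p')$ denotes the admissible walks from $n$ to $-n$ with $\tilde q=rp'$ up-steps). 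Since $T=\max\{|a|,|b|\}$ is symmetric in $a,b$, no new constants enter.

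First I would pin down the dominant walk $y^*$, the unique element of $Y_n(0)$, which takes $sm$ consecutive steps $-2R=-2dr$, so that its intermediate vertices are $j^*_k=n-2drk$, $1\le k\le sm-1$. Since $n^2-(j^*_k)^2=4d^2r^2\,k(sm-k)$, this yields
\[
h(y^*,0)=\frac{a^{sm}}{(4d^2r^2)^{sm-1}[(sm-1)!]^2}
=4r^2d^2\Big(\frac{a}{4r^2d^2}\Big)^{sm}\big((sm-1)!\big)^{-2},
\]
which is precisely (\ref{3.14.4}).

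Next I would bound the subdominant contributions $\sum_{p'\ge1}\sum_{y\in Y_n(p')}|h(y,z)|$. For $y\in Y_n(p')$ I split $h_1(y,z)=\Pi_1(z)\Pi_2(z)$ as in Part~2, where $\Pi_1$ gathers the $sm-1$ factors at vertices $j(t_k,y)$ chosen within one step $2R$ below the $j^*_k$ and $\Pi_2$ gathers the remaining $(r+s)p'$ factors; the estimate $|\Pi_2(z)|\le n^{-(r+s)p'}$ is unchanged, and Lemma~\ref{lem3.0} again gives $\Pi_1(z)=\Pi_1(0)\big(1+O((\log n)/n)\big)$. The decisive step is to invoke Lemma~\ref{lem3.1} with its parameter $S$ taken to be the dominant step size $R=dr$ (keeping the same gcd $d$): the backward vertices $j_k=+(n-2kR)$ and $K=sm-1$ satisfy $(K+1)R=n$, and the lemma yields $\Pi_1(0)\le C\,n^{1-d/R}h_1(y^*,0)=C\,n^{1-1/r}h_1(y^*,0)$. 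Combining this with the symmetric counting bound $\#Y_n(p')\le\binom{(r+s)p'+sm}{rp'}$ and the coefficient factor $|a|^{\tilde p}|b|^{\tilde q}$, one reaches, exactly as in (\ref{3.11.1})--(\ref{3.13.2}),
\[
\sum_{p'\ge1}\sum_{y\in Y_n(p')}|h(y,z)|\le C(r,s,T)\,|h(y^*,0)|\,n^{1-s-\frac1r},\qquad |z|\le1,
\]
whence (\ref{3.14.3}) and (\ref{3.14.31}) follow as (\ref{3.13.3}) and (\ref{3.13.31}) did, using $1-s-\tfrac1r<0$.

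I expect the only genuine subtlety --- as opposed to verbatim repetition of Parts~2--5 --- to be the correct bookkeeping of which quantity is fed into the auxiliary lemmas. The power of $n$ is governed by the size of the steps along the dominant walk, so it is $R=dr$, and \emph{not} $S$, that plays the role of the parameter $S$ in Lemma~\ref{lem3.1}, producing $n^{1-1/r}$ rather than $n^{1-1/s}$; a naive swap $r\leftrightarrow s$ in the \emph{statements} of the lemmas would confuse the roles of the gcd $d$ and the step length. (When $r=1$ the exponent $1-1/r$ degenerates to $0$; there the crude bound $\Pi_1(0)\le C\,h_1(y^*,0)$ already suffices, since the surviving factor $(T/n)^{(r+s)p'}$ still forces decay.) Beyond this relabelling, the admissibility of the vertex choices (the $j_k$ distinct with $-n<j_k<n$, and the offsets $\delta_k$ in the range for which Lemma~\ref{lem3.1} applies) and the convergence of the $p'$-sum are inherited from the already established forward case, so no new analytic difficulty arises.
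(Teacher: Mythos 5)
Your proposal is correct and follows essentially the same route as the paper, whose entire argument for Lemma~\ref{lem3.3} is the one-line remark that one exchanges the roles of $R$ and $S$ (hence $r\leftrightarrow s$, $a\leftrightarrow b$) and repeats Parts 2--5 of Section~2; you have simply carried out those "proper adjustments" explicitly, including the one genuinely delicate point that Lemma~\ref{lem3.1} must be applied with its parameter $S$ set to the dominant step size $R=dr$ (yielding $n^{1-1/r}$) and the degenerate case $r=1$.
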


\begin{Remark}
If $R=1 $ then $d=r=1, \; S=s,$   and for any $n$ if we go backward
from $+n$ to $-n$ it could be done without using forward steps $
+2s.$ Analogues of (\ref{3.14.3}) could be given for any $s$ -- see
Section~3.5.
\end{Remark}
\bigskip

7.  The set $ \Delta $ defined in (\ref{3.4.4}) certainly contains
infinitely many {\em even}  integers because $ m $ could run over $ 2
\mathbb{N}.$ But if $rsd $ is even, then $\Delta \cap (2 \mathbb{N}
+1) = \emptyset $ while  $ \Delta \cap (2 \mathbb{N} +1) $ is
infinite if $rsd$ is odd, i.e., if $R$ and  $S$ are odd. In any case,
if $R\neq S, $ say $R<S, $
$$
\min \{|\beta_n^\pm (0)/\beta_n^\mp (0)|, \; n = (rsd)m \} \leq
(C_3)^m   \frac{(rm-1)!}{(sm-1)!} \leq (C_4)^m m^{-|r-s|m}.
$$
In view of Criterion~\ref{crit1}, these observations lead to the
following.
\begin{Theorem}
\label{thm3.1} For any potential $v$ in (\ref{3.3.2}) there is {\em
no} basis consisting of root functions of $L_{Per^+} (v). $ If $R$
and $S$ are odd, the same is true for $L_{Per^-} (v). $
\end{Theorem}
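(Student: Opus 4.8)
The plan is to apply Criterion~\ref{crit1} to the arithmetic progression $\Delta = (rsd)\mathbb{N}$ from (\ref{3.4.4}), intersected with the parity class dictated by the boundary conditions, and to show that the ratio quantity $t_n(0)$ is \emph{unbounded} along $\Delta$. First I would fix the parity-compatible index set. For $L_{Per^+}$ the two-dimensional spectral blocks sit over $\Gamma^+ = 2\mathbb{N}$, and since $m$ in $n=rsdm$ may range over $2\mathbb{N}$, the set $\Delta^+ := \Delta \cap 2\mathbb{N}$ is infinite for \emph{every} choice of $r,s,d$; this is why no arithmetic restriction is needed in the periodic case. For $L_{Per^-}$ the blocks sit over $\Gamma^- = 2\mathbb{N}-1$, so I need $n=rsdm$ odd for infinitely many $m$, which holds precisely when $rsd$ is odd, i.e.\ when $R=dr$ and $S=ds$ are both odd --- exactly the extra hypothesis in the antiperiodic case. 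Fix $\Delta^\pm$ accordingly as the infinite index set to feed into the criterion.

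Next I would check that $\Delta^\pm$ satisfies the structural hypotheses (\ref{a0})--(\ref{a2}) of Criterion~\ref{crit1}. Since $a,b\neq 0$, the explicit leading terms (\ref{3.13.4}) and (\ref{3.14.4}) are nonzero, so by the asymptotics (\ref{3.13.3}) and (\ref{3.14.3}) we get $\beta_n^+(0)\neq 0$ and $\beta_n^-(0)\neq 0$ for all large $n\in\Delta$. Hence $\Delta_1=\Delta^\pm$ up to finitely many terms and $\Delta_0=\emptyset$, so (\ref{a0}) is vacuous while (\ref{a1}) holds. The stability estimate (\ref{a2}) is then supplied verbatim by the two-sided bounds (\ref{3.13.31}) and (\ref{3.14.31}) with constant $c=2$. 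At this point Criterion~\ref{crit1}(b) applies and reduces the whole question to the growth of $t_n(0)$.

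The heart of the matter is a short computation. Dividing (\ref{3.13.3})--(\ref{3.13.4}) by (\ref{3.14.3})--(\ref{3.14.4}) gives, for $n=rsdm$,
$$
\frac{\beta_n^-(0)}{\beta_n^+(0)} = \frac{r^2}{s^2}\cdot\frac{(a/4r^2d^2)^{sm}}{(b/4s^2d^2)^{rm}}\left(\frac{(rm-1)!}{(sm-1)!}\right)^{2}\bigl(1+o(1)\bigr).
$$
Because $r\neq s$, Stirling's formula shows the factorial factor decays (or grows) super-exponentially and dominates the exponential coefficient factor uniformly in $a,b$, so the smaller of $|\beta_n^+(0)/\beta_n^-(0)|$ and $|\beta_n^-(0)/\beta_n^+(0)|$ is bounded by $(C_4)^m\,m^{-|r-s|m}\to 0$. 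Consequently $t_n(0)=\max\{\,\cdot\,,\cdot\,\}\to\infty$ along $\Delta^\pm$, whence $\limsup_{n\in\Delta_1}t_n(0)=\infty$, and Criterion~\ref{crit1}(b) rules out a Riesz basis of $E(\Delta^\pm)$. By Remark~\ref{rem1}, an infinite $\Delta^\pm$ then admits no basis of $E(\Delta^\pm)$ at all; and since $E(\Delta^\pm)$ is a spectral invariant subspace complemented by the bounded projection $P(\Delta^\pm)$, any root-function basis of all of $L^2([0,\pi])$ would restrict to one of $E(\Delta^\pm)$, a contradiction. This gives non-existence of a root-function basis for $L_{Per^+}(v)$ in general, and for $L_{Per^-}(v)$ when $R,S$ are odd.

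Given the lemmas, the main obstacle is mild: it is the parity bookkeeping --- in particular verifying that $\Delta^-$ is infinite, which is what forces the ``$R,S$ odd'' restriction in the antiperiodic case --- together with the descent from the invariant subspace $E(\Delta^\pm)$ to the full space. The genuinely hard analytic work, namely isolating the single dominant admissible path $x^*$ (resp.\ $y^*$) and dominating the sum over all remaining paths in $X_n(p)$ (resp.\ $Y_n$), is already done in Lemmas~\ref{lem3.2} and~\ref{lem3.3}; so for the theorem proper the only delicate point is confirming that the factorial ratio beats the competing exponential coefficient factor \emph{uniformly} in $a$ and $b$, which is what makes the conclusion hold for \emph{every} admissible potential rather than for generic coefficients only.
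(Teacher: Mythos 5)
Your proposal is correct and follows essentially the same route as the paper: item 7 of Section~2 makes exactly your parity observation (even $n=rsdm$ always available for $Per^+$; odd $n$ available iff $rsd$, equivalently $R$ and $S$, is odd), then combines Lemmas~\ref{lem3.2} and~\ref{lem3.3} to show $\min\{|\beta_n^\pm(0)/\beta_n^\mp(0)|\}\leq (C_4)^m m^{-|r-s|m}\to 0$, and concludes via Criterion~\ref{crit1}. The only cosmetic difference is that you spell out the verification of hypotheses (\ref{a0})--(\ref{a2}) and the descent from $E(\Delta^\pm)$ to $L^2([0,\pi])$, which the paper leaves implicit.
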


\section{Potentials $ a e^{-2ix} +b e^{2six}, \; s>2.$}

1. If we analyze $ bc = Per^- $ in the case the potential is of the
form (\ref{3.3.2}) and one of the parameters $r, s $ in (\ref{3.3.5})
is even then the constructions in Section~2 cannot be applied to give
us a negative statement like Theorem~\ref{thm3.1}. In this section we
present elaborate analysis in the case $r=1, \, s>2 $ and
\begin{equation}
\label{4.1.1}  \Delta = \{n= sm-1, \; m \in \mathbb{N}\}.
\end{equation}
Observe, that if $s $ is even, then $\Delta$ consist of odd numbers,
and if $s$ is odd then $\Delta \cap (2\mathbb{N}-1) \neq \emptyset $
and $\Delta \cap 2\mathbb{N} \neq \emptyset. $ So, by showing that
$$
\inf \{ |\beta_n^\pm (0)/ |\beta_n^\mp (0)| : \quad n \in \Delta,
\;\; n \geq N(v)\} = 0
$$
we would obtain by Criterion~\ref{crit1} that there is {\em no} basis
in $L^2([0,\pi])$ consisting of root functions of $L_{Per^-} (v) $
for potentials of the form
\begin{equation}
\label{4.1.4} v(x) = a e^{-2ix} +B e^{2six}, \quad a, b \neq 0,
\;\; s \geq 3.
\end{equation}
Let us remind that Theorem~\ref{thm3.1} in Section~2 considers the
operators $L_{Per^+} (v) $ for any $s.$ Its claim follows from
Criterion~\ref{crit1} because
\begin{equation*}
\label{4.2.1} \inf \{ |\beta_n^\pm (0)/ |\beta_n^\mp (0)| : \quad n
\in s \mathbb{N}, \; n \geq N(v)\} = 0.
\end{equation*}
In the sequel we write for convenience $h_1(x) $  instead of
$h_1(x,0),$ and $h(x) $  instead of $h(x,0).$

\bigskip

2. Fix $n = sm-1; $ a path $x= (x(t))_{t=1}^{\nu+1} $ from $-n$ to
$n$ gives a non-zero term $h(x,z) $ in $\beta^+_n (z)$ if and only
if (compare with (\ref{3.3.6}))
\begin{equation} \label{4.2.2} x(t) = -2 \quad \text{or} \quad x(t)= 2s.
\end{equation}
Set
\begin{eqnarray}
\label{4.2.3}
p= \#\{t: \; x(t) = -2,  \quad 1 \leq t \leq \nu (x) +1\}, \\
\nonumber q= \#\{t: \; x(t) = 2s,  \quad 1 \leq t \leq \nu (x) +1\};
\end{eqnarray}
then we have
\begin{equation}
\label{4.2.5} 2n = -2p+ 2sq \Rightarrow sm-1 = -p + sq \Rightarrow  p
= 1 + s(q-m).
\end{equation}
 We set
\begin{equation}
\label{4.3.1} p= 1+ s \kappa, \quad  q = m+ \kappa
\end{equation}
to satisfy (\ref{4.2.5}), and define $X_n (\kappa) $ as the set of
all admissible paths satisfying (\ref{4.2.2}) which parameters $p $
and $q $ are given by (\ref{4.3.1}). Then
\begin{equation}
\label{4.3.2} \# X_n (0) = m+1,
\end{equation}
and with $p= 1, \; q= m $ a path $ \xi^\tau \in X_n (0) $ is uniquely
determined by the position $\tau $ of its only step $-2.$ In other
words, the paths in $X_n (0)$ are given by
\begin{equation}
\label{4.3.3}
\xi^\tau (t) = \begin{cases}
2s,  \quad  t \neq \tau
\\  -2,   \quad  t = \tau
\end{cases}  \qquad    1 \leq \tau, \,t \leq m+1.
\end{equation}
Among them the two paths $\xi^1 $ and $\xi^{m+1} $ are special in
the sense that $ h_1 (\xi^1) = h_1 (\xi^{m+1}) < 0,  $ while $ h_1
(\xi^\tau) > 0 $ for $ \tau = 2, \ldots , m. $ More precisely, since
$ j(t,\xi^1) = -n-2 + 2s(t-1), $ we have
$$
n^2 - j(1,\xi^1 )^2 = n^2 - (-n-2)^2 = - 4(n+1) = -4ms
$$
and
$$
n^2 - j(t+1,\xi^1)^2 = n^2 - (-n-2 + 2st)^2 =
4s(m-t)(st-1), \;\; t=1, \ldots, m-1,
$$
so it follows that
\begin{equation}
\label{4.4.4} h_1 (\xi^1) = \prod_{t=1}^m  [n^2 - j(t,\xi^1)^2]^{-1}
= \frac{-1}{(4s)^m m!} \left ( \prod_{t=1}^{m-1} (st-1)   \right
)^{-1}.
\end{equation}
By symmetry $ h_1 (\xi^{m+1} ) =  h_1 (\xi^1 ), $ so we obtain for
their sum
\begin{equation}
\label{4.4.5} h_1 (\xi^1 ) + h_1 (\xi^{m+1} ) = - H^-(m)
\end{equation}
with
\begin{equation}
\label{4.4.5a}
 H^- (m) = \frac{2}{(4s)^m m!} \left ( \prod_{t=1}^{m-1} (st-1)   \right
)^{-1} = \frac{2s}{(2s)^{2m} m!}\frac{\Gamma (1-\frac{1}{s})}{\Gamma
(m-\frac{1}{s})}.
\end{equation}

For $ \xi^\tau  $ with $ 2 \leq \tau \leq m $ we have
\begin{equation}
\label{4.5.3}
j(t,\xi^\tau)  =  \begin{cases}
-n + 2st, \quad t \leq  \tau-1, \\
-n-2 + 2s(t-1), \quad \tau \leq t \leq m.
\end{cases}
\end{equation}
By (\ref{4.1.1}) and (\ref{4.5.3})
$$
n^2 - j(\xi^\tau, t)^2 = \begin{cases}
4st [(m-t)s -1], \quad  1 \leq t \leq \tau -1,\\
4s(s(t-1)-1)(m-(t-1)), \quad \tau  \leq t \leq  m,
\end{cases}
$$
which implies, for $ 2\leq \tau \leq m, $ that
\begin{equation}
\label{4.5.5} h_1 (\xi^\tau ) = \frac{1}{(4s)^m (\tau -1)! (m-\tau
+1)! } \left ( \prod_{t=m-\tau +1}^{m-1} (st-1)   \right )^{-1} \left
( \prod_{t=\tau -1}^{m-1} (st-1)   \right )^{-1}.
\end{equation}
One can easily see that the sum
\begin{equation}
\label{4.6.0} H^+ =H^+ (m):= \sum_2^m h_1 (\xi^\tau )
\end{equation}
 can be written (if we change $\tau $ to $\tau -1 $) as
\begin{equation}
\label{4.6.1} H^+ =  \frac{1}{(4s)^m} \sum_{\tau=1}^{m-1}
\frac{\prod_1^{\tau-1} (st-1) }{ \tau!} \frac{\prod_1^{m-\tau-1}
(st-1) }{ (m-\tau)!} \left ( \prod_{t=1}^{m-1} (st-1)   \right
)^{-2}.
\end{equation}
We set $\alpha = 1/s; $ then
\begin{equation}
\label{4.6.10} \alpha < 1/2 \quad (\text{so} \;\; 1 -2\alpha > 0)
\quad \text{for} \;\; s>2.
\end{equation}
Let
\begin{equation}
\label{4.6.2} A_\alpha (k) = \frac{\alpha \prod_1^{k-1} (t-\alpha)
}{k!} = \frac{ \alpha \Gamma (k-\alpha )}{\Gamma (1-\alpha )\Gamma
(k+1 )},\quad k\geq 2,
\end{equation}
\begin{equation}
\label{4.6.3}
A_\alpha (0) = 0, \quad A_\alpha (1) =\alpha.
\end{equation}
Then
\begin{equation}
\label{4.6.4}
2 A_\alpha (m) \times (H^+/H^-) =
\sum_{\tau =1}^{m-1} A_\alpha (\tau) A_\alpha (m- \tau),
\end{equation}
and
\begin{equation}
\label{4.6.5} \sum_{k =0}^\infty  A_\alpha (k) w^k = f_\alpha (w):=1
- (1-w)^\alpha
\end{equation}
happens to be a nice generating function. The right-hand side of
(\ref{4.6.4}) is the $m$-th Taylor coefficient $T_m $ of the square
$$
(f_\alpha (w))^2=(1 - (1-w)^\alpha )^2 = 1 -2(1-w)^{\alpha} +
(1-w)^{2\alpha} = 2f_\alpha (w)- f_{2\alpha} (w),
$$
so it equals
$$
T_m ([f_\alpha]^2) = 2A_\alpha (m) - A_{2\alpha} (m)
$$
Hence, dividing by $2A_\alpha (m) $
and taking into account (\ref{4.6.10}) and  (\ref{4.6.2}),
we obtain
\begin{equation}
\label{4.7.2} \frac{H^+}{H^-} = 1 -  \frac{A_{2\alpha}(m)}{2A_\alpha
(m)}= 1- \frac{\Gamma (1-\alpha) \Gamma(m-2\alpha )} {\Gamma
(m-\alpha )\Gamma (1-2\alpha)}, \quad \alpha = 1/s.
\end{equation}
The Stirling formula shows that
\begin{equation}
\label{4.7.3} r(m) := \frac{A_{2\alpha}(m)}{2A_\alpha (m)}=
\frac{\Gamma (1-\alpha) \Gamma(m-2\alpha )} {\Gamma (m-\alpha
)\Gamma (1-2\alpha)} = \frac{\Gamma (1-\alpha)} {\Gamma (1-2\alpha)}
\rho (m)m^{-\alpha},
\end{equation}
where  $  \rho (m) \to 1. $ Therefore,
\begin{equation}
\label{4.7.4}
 \frac{H^- - H^+}{H^- + H^+} = \frac{r(m)}{2- r(m)}
 \asymp  \frac{1}{2}r(m)
\end{equation}
for large enough $m,$ i.e., we proved the following.
\begin{Lemma}
\label{lem4.1} In the above notations,
\begin{equation}
\label{4.8.4} H^- (m) - H^+ (m)  \gtrsim  m^{-1/s} \left (H^- (m) +
H^+ (m) \right ) \quad \text{as}\;\; m\to \infty.
\end{equation}
\end{Lemma}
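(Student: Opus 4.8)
The plan is to obtain the estimate as a direct consequence of the ratio identity (\ref{4.7.2}) and the Stirling asymptotics (\ref{4.7.3}), both already assembled in the computation preceding the statement; the only remaining task is to repackage them as a one-sided (in fact two-sided) bound. Throughout I write $\alpha = 1/s$ and $r(m) = A_{2\alpha}(m)/(2A_\alpha(m))$.

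First I would record the algebraic step. Since $\alpha \in (0,1/2)$ for $s>2$, the factors $t-\alpha$ and $t-2\alpha$ occurring in (\ref{4.6.2}) are positive, so $A_\alpha(m), A_{2\alpha}(m) > 0$ and $r(m) > 0$; likewise $H^-(m) > 0$ and $H^+(m) > 0$ by (\ref{4.4.5a}) and (\ref{4.6.1}). From $H^+/H^- = 1 - r(m)$ in (\ref{4.7.2}) I get $H^- - H^+ = r(m)\,H^-$ and $H^- + H^+ = (2-r(m))\,H^-$, whence
$$\frac{H^- - H^+}{H^- + H^+} = \frac{r(m)}{2-r(m)},$$
which is (\ref{4.7.4}); in particular $H^- - H^+ > 0$.

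Next I would pin down the order of $r(m)$. Applying the Stirling formula to the Gamma quotient $\Gamma(m-2\alpha)/\Gamma(m-\alpha) \sim m^{-\alpha}$ gives (\ref{4.7.3}), namely $r(m) = \frac{\Gamma(1-\alpha)}{\Gamma(1-2\alpha)}\rho(m)\,m^{-\alpha}$ with $\rho(m)\to 1$. Since $r(m)\to 0$, the denominator in the displayed identity tends to $2$, so $\frac{r(m)}{2-r(m)} \asymp \tfrac12 r(m) \asymp m^{-1/s}$, and therefore $H^- - H^+ \asymp m^{-1/s}(H^- + H^+)$, which yields the asserted $\gtrsim$ bound.

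The step I would treat as the crux is the positivity and finiteness of the constant $\Gamma(1-\alpha)/\Gamma(1-2\alpha)$, and this is precisely where the hypothesis $s>2$ enters. Indeed $s>2$ is equivalent to $\alpha<1/2$, i.e.\ $1-2\alpha>0$ (cf.\ (\ref{4.6.10})), which keeps $\Gamma(1-2\alpha)$ finite and the constant strictly positive; at $s=2$ one has $2\alpha=1$ and $\Gamma(1-2\alpha)$ blows up, the leading constant degenerates to $0$, and the difference $H^- - H^+$ would be of strictly smaller order than $m^{-1/s}(H^-+H^+)$, so the present argument would break down and a separate treatment would be required. Ensuring the Stirling estimate holds with $\rho(m)\to1$ uniformly, so that the order is exactly $m^{-1/s}$ rather than merely $o(1)$ relative to $H^-+H^+$, is thus the one genuinely analytic point of the proof.
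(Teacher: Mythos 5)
Your proof is correct and takes essentially the same route as the paper: the paper's own argument is precisely the derivation of the identity \eqref{4.7.2}, the Stirling asymptotics \eqref{4.7.3}, and the resulting relation \eqref{4.7.4}, from which the lemma is read off exactly as you do. Your added checks --- positivity of $H^{\pm}(m)$ and of $r(m)$, and the observation that $s>2$ is what keeps $\Gamma(1-2/s)$ finite and the leading constant strictly positive --- only make explicit what the paper leaves implicit.
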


By Lemma~\ref{lem3.0},  for large enough $n$ and $|z| \leq 1$ we
have that
\begin{equation}
\label{49.1} h_1 (\xi, z) = h_1 (\xi,0) (1+\theta(\xi,z)), \;\;
|\theta(\xi,z)| \leq \frac{4\log n}{n}, \;\; \xi \in X_n (0).
\end{equation}
Indeed, if $\xi= \xi^\tau, \; \tau = 2, \ldots, m-1 $    then
(\ref{49.1}) follows directly from Lemma~\ref{lem3.0}. To handle
$h_1 (\xi^1, z), $ we write it in the form $$h_1 (\xi^1, z)=
\frac{1}{n^2- (-n-2)^2 +z} \prod_{k=1}^{m-1}\frac{1}{n^2 -(-n-2
+2sk)^2 +z}. $$ Then we apply Lemma~\ref{lem3.0}) to the product
on the right and estimate the single factor by $(-4n-4+z)^{-1}=
-(4n+4)^{-1} (1+ O(1/n).$ The case $\xi= \xi^{m+1}$ is symmetric.

From  (\ref{49.1}) and (\ref{4.4.5}) it follows that
\begin{equation}
\label{49.2} h_1(\xi^1, z) +h_1(\xi^{m+1},z) =-H^-(m) \, \left [ 1+
O((\log n)/n) \right ].
\end{equation}
On the other hand, by (\ref{4.6.0}) and (\ref{49.1}) we obtain that
$$ \sum_{\tau=2}^m h_1 (\xi^\tau, z)=\sum_{\tau=2}^m h_1 (\xi^\tau)
(1+\theta(\xi^\tau,z)) = H^+(m) + \Omega,  $$ where
$$|\Omega| = \left |\sum_{\tau=2}^m h_1 (\xi^\tau)
\theta(\xi^\tau,z))\right | \leq \sum_{\tau=2}^m h_1
(\xi^\tau)\frac{4\log n}{n} =H^+ (m) \frac{4\log n}{n}. $$ Thus,
we have
\begin{equation}
\label{49.3} \sum_{\tau=2}^m h_1 (\xi^\tau, z)= H^+ (m) [1+ O
((\log n)/n)].
\end{equation}
Now (\ref{49.2}) and (\ref{49.3}) give us, for  $|z|\leq 1,$ that
\begin{equation}
\label{49.4} \sum_{\xi\in X_n (0)} h_1 (\xi^\tau, z)= (H^+ (m) -H^-
(m)) \, \left [1+ O ((\log n)/n) \right].
\end{equation}
\bigskip

3.  Next we estimate the ratio of $$ \sum_{x \in X_n(\kappa)} |h_1
(x,z)|\quad \text{and} \sum_{\xi \in X_n(0)} |h_1 (\xi) | = H^- +
H^+. $$ Fix  $x \in X_n (\kappa), \; \kappa \geq 1, $ and  set
\begin{equation}
\label{4.9.1} \tau = \min \left [m+1, \min\{t: \; x(t) = -2\} \right
].
\end{equation}
Let
\begin{equation}
\label{4.9.2} j^*_k = j(k,\xi^{\tau}), \quad k=1, \ldots m.
\end{equation}
denote the vertices of $\xi^{\tau}.$

Next we choose $m$ vertices $j_k = j(t_k, x) $ of $x $ so that $j_k$
is ''close'' to $j^*_k $ as follows. If $\tau = m$ or $\tau
 = m+1 $ we
set $j_k = j_k^*, \; k=1, \ldots, m.$ If $ \tau < m $ we set
\begin{equation}
\label{4.9.4} t_k = k \quad  \text{if}  \;\; 1 \leq k \leq \tau
\end{equation}
and
\begin{equation}
\label{4.10.1} t_k = \min \{t>\tau: \; j(t, x) > j^*_{k-1}\}, \quad
\tau +1 \leq k \leq m.
\end{equation}

Let $J(x): =(j(t, x))_{t=1}^{\nu(x)}  $ be the sequence of the
vertices of $x.$ The sequence $(j_k)_{k=1}^m = (j(t_k, x))_{k=1}^m $
is a subsequence of  $J(x);$ let
$$
I(x) = (i_1, \ldots, i_\rho), \; \rho = \nu (x) - m = (1+s)\kappa
$$
be its complementary subsequence in $J (x). $ Consider  the mapping
\begin{equation}
\label{4.10.10}
 \Phi_\kappa: X_n (\kappa)  \to X_n(0) \times \mathbb{Z}^{(1+s)\kappa}, \quad
\Phi_\kappa (x) = (\xi^{\tau}, I(x)).
 \end{equation}

\begin{Lemma}
\label{lem4.2}
The mapping $\Phi_\kappa $ is injective.
\end{Lemma}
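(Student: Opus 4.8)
The plan is to prove injectivity by showing that $x$ can be reconstructed from the pair $\Phi_\kappa(x) = (\xi^\tau, I(x))$; equivalently, I will take two paths $x, x' \in X_n(\kappa)$ with $\Phi_\kappa(x) = \Phi_\kappa(x')$ and argue that their vertex sequences $J(x)$ and $J(x')$ coincide, whence $x = x'$. Since $\xi^\tau = \xi^{\tau'}$ forces $\tau = \tau'$ (the paths in $X_n(0)$ are parametrized by the position of their unique step $-2$, see (\ref{4.3.3})), the two decompositions share the same skeleton vertices $j^*_k$ and the same ordered complementary sequence $I(x) = I(x') \in \mathbb{Z}^{(1+s)\kappa}$.

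First I would recover the head. Because the construction sets $t_k = k$ for $1 \le k \le \tau$ (see (\ref{4.9.4})), none of the complementary vertices lies among the first $\tau$ positions, and one checks directly from (\ref{4.3.3}) and (\ref{4.5.3}) that $j_k = j^*_k$ there; hence the first $\tau$ steps of any preimage are forced to be $\tau-1$ steps $+2s$ followed by the single step $-2$. In particular $J(x)$ and $J(x')$ agree through position $\tau$.

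Next I would recover the tail by inverting the greedy rule (\ref{4.10.1}). Traversing the walk from left to right while maintaining the current vertex $v$ and the current threshold $j^*_{k-1}$, the next vertex is either the next unread entry of $I(x)$ — a complementary vertex, which by (\ref{4.10.1}) is necessarily $\le j^*_{k-1}$ — or the next main vertex $j_k = v + 2s$, the first vertex strictly above the threshold. Because the only admissible steps are $+2s$ and $-2$ (see (\ref{4.2.2})), each such decision should be read off from the data: if $v + 2s \le j^*_{k-1}$ the next vertex must be complementary, while if $v + 2s > j^*_{k-1}$ one decides by testing whether the next unread entry of $I(x)$ equals $v - 2$, updating the threshold to $j^*_k$ whenever a main vertex is emitted.

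The main obstacle is proving that this decoding is unambiguous. Comparing $x$ and $x'$ at their first point of disagreement $t_0 > \tau$, the two steps there are necessarily $+2s$ and $-2$; if both resulting vertices stay $\le$ the common threshold they are both complementary, and $I(x) = I(x')$ is contradicted at once, since $v + 2s \ne v - 2$. The delicate case is when one walk crosses the threshold — producing a main vertex $v + 2s$ — while the other stays below it with a complementary vertex $v - 2$; here $I(x) = I(x')$ does not fail at $t_0$, and one must rule out that the higher walk later emits a complementary vertex of value exactly $v - 2$. I expect this to be the crux, to be settled using the exact monotonicity $j^*_k = j^*_{k-1} + 2s$ for $k > \tau$ together with the bound $0 \le \delta_k = j^*_k - j_k < 2s$ on the deviations, which pin the positions of the main vertices tightly enough to force the two complementary subsequences to diverge past $t_0$, contradicting $I(x) = I(x')$ and completing the proof.
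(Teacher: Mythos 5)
Your overall strategy---reconstructing $x$ from the pair $(\xi^{\tau}, I(x))$---is the same as the paper's, and your recovery of the head is correct: the first $\tau$ steps are forced and $j_k=j^*_k$ for $1\le k\le\tau$. The gap is exactly where you place it, and it is a genuine one: in the threshold-crossing case $v+2s>j^*_{k-1}$ with the next unread entry of $I(x)$ equal to $v-2$, you assert that the two candidate continuations ``force the two complementary subsequences to diverge past $t_0$,'' but you do not prove it, and without that step the greedy decoding is not shown to be unambiguous. The missing argument is short, so let me supply it. Suppose the walk takes the step $+2s$ at the ambiguous position, emitting the main vertex $j_k=v+2s$. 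Any immediately following run of main vertices is traversed by $+2s$ steps: a $-2$ step from a main vertex $j_l\le j^*_l$ lands at $j_l-2\le j^*_l$, hence below the current threshold in (\ref{4.10.1}), hence at a complementary vertex. Consequently the first complementary vertex emitted after $t_0$ has the form $j_{k'}-2$ with $j_{k'}\ge j_k$, so its value is at least $v+2s-2>v-2$; and if no complementary vertex is ever emitted again, the two walks consume different numbers of entries of sequences of the common length $\rho=(1+s)\kappa$. Either way $I(x)\ne I(x')$, so the $-2$ step is forced and the decoding is deterministic. With this inserted, your proof is complete.

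For comparison, the paper avoids the pointwise case analysis by decoding $I(x)$ in blocks rather than step by step: consecutive complementary vertices that are adjacent in the walk differ by a legal step $-2$ or $2s$, whereas at the boundary between two maximal runs the difference equals $-2+2s\mu$, where $\mu\ge 1$ is the number of interleaved main vertices; since $-2+2s\mu$ is never $-2$ or $2s$, the break points are detectable inside $I(x)$ itself, and the size of each jump tells you exactly how many main vertices (each obtained by adding $2s$) to insert there. That version concentrates all the work in one arithmetic observation about jump sizes; yours distributes it over a greedy traversal and needs the disambiguation lemma above. Both routes are valid.
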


\begin{proof}
The lemma will be proved if we show that given $\Phi_\kappa (x)
=(\xi^{\tau}, I(x)) $ we can restore in an unique way the path $x$
(or equivalently, the sequence of its vertices $J(x)). $

In view of the construction, if  $\tau = m $ or $\tau = m+1 $  then
$$
J(x) = (j_1^*, \ldots, j^*_m, i_1, \ldots, i_\rho), \quad \rho =\nu (x) -m.
$$

In the case $ \tau < m $ we have to find the vertices $j_k $ and
their places in $J(x). $
 By (\ref{4.9.4}),
$$j_k = j(k, x) = j^*_k, \quad   1\leq k \leq \tau. $$
Consider the first term $i_1 $ of the sequence $I(x). $ By
(\ref{4.10.1}),  there is an integer $\mu_1$ such that $ 0 \leq \mu_1
\leq m - \tau $ and
$$
i_1 - j^*_{\tau} = -2 + 2s \cdot \mu_1;
$$
then $j_k = j(k,x) = j^*_{\tau} + 2s (k-\tau ) \; $ for $
 \; \tau +1 \leq k \leq  k_1:= \tau + \mu_1. $
If  $ k_1 = m $ we have $j(t,x)= i_{t-m} $ for $m+1 \leq t \leq \nu
(x), $ so $J(x) $ is restored.

Otherwise, we set
$$
\tau_1 = \min \{ t: \; i_{t+1} - i_t \not \in \{-2, 2s \}, \; 1 \leq
t < \rho \}.
$$
From (\ref{4.10.1}) it follows that $i_1, \ldots, i_{\tau_1} $ are
successive vertices of $x, $  so we have
$$
j(t,x) = i_{t - k_1}, \quad   k_1 + 1 \leq  t \leq  \tau_1 + k_1.
$$
Moreover, there is $\mu_2 \in \mathbb{N} $ such that
 $$
i_{\tau_1+1} - i_{\tau_1} = -2 + 2s \cdot \mu_2,
$$
which implies
$$
j_k = i_{\tau_1} + 2s (k-k_1), \quad
k_1 + 1 \leq k \leq k_2: = k_1 + \mu_2,
$$
so
$$
j(t,x) = i_{\tau_1} + 2s (t-\tau_1-k_1),   \quad \tau_1 + k_1 + 1
\leq t \leq  \tau_1 + k_2.
$$

In the case  $ k_2 = m $ we have $j(t,x)= i_{t-m} $ for $m+\tau_1 +1
\leq t \leq \nu (x), $ so $J(x) $ is restored. Otherwise, we set
$$
\tau_2 = \min \{ t: \; i_{t+1} - i_t \not \in \{-2, 2s \}, \; \tau_1
+1 \leq t < \rho \}.
$$
and continue by induction.
\end{proof}
\bigskip

Fix $x \in X_n(\kappa), $  and let $(j_k)_{k=1}^m $ and $\Phi (x) =
(\xi^{\tau}, I(x)) $ be defined as above. Then
\begin{equation}
\label{14.4} h_1 (x,z) = \prod_{k=1}^m (n^2 - j_k^2+z)^{-1} \cdot
\prod_{i \in I(x)} (n^2 - i^2+z)^{-1},
\end{equation}
and by Lemma~\ref{lem3.0} we have $$\prod_{k=1}^m (n^2 -
j_k^2+z)^{-1}=\left ( \prod_{k=1}^m (n^2 - j_k^2)^{-1} \right ) \left
(1+ O((\log n)/n)\right ).$$ On the other hand, by (\ref{4.9.4}) and
(\ref{4.10.1}), $j_k =j^*_k $ for $1 \leq k \leq \tau$ and $j^*_{k-1}
< j_k \leq j^*_k $ for $ \tau <k \leq m. $ Therefore, by
Lemma~\ref{lem3.1} we obtain $$ \frac{1}{h_1 (\xi^{\tau})}
\prod_{k=1}^m (n^2 - j_k^2)^{-1} = \prod_{k=\tau}^m \frac{n^2 -
(j^*_k)^2}{n^2 - (j_k)^2} \leq C n^{1-\frac{1}{s}}.$$ (Since $j^*_k =
n- 2s(m+1-k), $  we apply Lemma~\ref{lem3.1} after changing  the
summation index by $ \tilde{k} = m+1 -k$.) Thus, the above
inequalities imply that
\begin{equation}
\label{14.5} \prod_{k=1}^m (n^2 - j_k^2+z)^{-1} \leq C \,h(\xi^\tau)
\, n^{1-\frac{1}{s}}.
\end{equation}

 Let $X_n(\kappa, \tau)$ be the set of all $x \in X_n(\kappa) $ such that
(\ref{4.9.1}) holds. The sets $X_n(\kappa, \tau), \; 1\leq \tau \leq
m+1$ are disjoint,  and
$$ X_n(\kappa) = \bigcup_{\tau =1}^{m+1} X_n(\kappa, \tau). $$
In view of (\ref{14.4}) and (\ref{14.5}) we have
\begin{equation}
\label{4.14.5} \sum_{x \in X_n(\kappa, \tau)} |h_1 (x,z)| \leq  C
n^{1-\frac{1}{s}} |h_1 (\xi^\tau)|  \sum_{x \in X_n(\kappa, \tau)}
\prod_{i \in I(x)} |n^2 - i^2+z|^{-1}.
\end{equation}
By Lemma \ref{lem4.1} the mapping $\Phi_\kappa $ is injective, so the
sequence $I(x)= (i_1, \ldots, i_\rho) $ is uniquely determined by $x
\in X_n (\kappa, \tau).$ Moreover,  $$ |n^2 - i^2 +z| \geq |n^2- i^2|
- 1 \geq \frac{1}{2} |n^2 - i^2| \quad \text{for} \;\; |z|\leq 1. $$
Therefore,
$$ \sum_{x \in X_n(\kappa, \tau)} \prod_{i \in I(x)} |n^2 - i^2+z|^{-1}
\leq \sum_{i_1,.., i_\rho \neq \pm n} \frac{2^\rho}{|n^2
-i_1^2|\cdots |n^2 - i_\rho^2|} $$ $$ =\left ( \sum_{i \neq \pm n}
\frac{2}{|n^2 - i^2|}  \right )^\rho  \leq \left ( \frac{C \log n
}{n}  \right )^\rho, \quad  \rho = (s+1)\kappa,$$ because $\sum_{i
\neq \pm n} |n^2 - i^2|^{-1} \leq (C\log n)/n $ (e.g., see Lemma 10
in \cite{DM10}). Therefore, taking a sum over $ \tau = 1, \ldots ,
m+1 $ in (\ref{4.14.5}), we obtain the following.
\begin{Lemma}
\label{lem14.1} In the above notations, for $\kappa= 1,2, \ldots,$
\begin{equation}
\label{4.14.6} \sum_{x \in X_n(\kappa)} |h_1 (x,z)| \leq  C
n^{1-\frac{1}{s}} \left ( \frac{C \log n }{n}  \right )^{(s+1)\kappa}
 \left ( H^- + H^+  \right ).
\end{equation}
\end{Lemma}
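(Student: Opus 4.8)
The plan is to assemble the ingredients prepared in the preceding discussion into the single summation (\ref{4.14.6}), organizing the index set by the disjoint decomposition $X_n(\kappa) = \bigcup_{\tau=1}^{m+1} X_n(\kappa,\tau)$, where $\tau$ is the (capped) position (\ref{4.9.1}) of the first $-2$ step of $x$. The idea is that every path in $X_n(\kappa)$ is a perturbation of one of the $m+1$ base paths $\xi^\tau \in X_n(0)$, obtained by inserting $\rho=(s+1)\kappa$ extra vertices, and that this extra freedom is penalized by a small factor.

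First I would fix $\tau$ and treat a single block $X_n(\kappa,\tau)$. For $x$ in this block the factorization (\ref{14.4}) separates $h_1(x,z)$ into a \emph{main} product over the $m$ tracking vertices $j_k$ and a \emph{remainder} product over the complementary index set $I(x)$ of length $\rho=(s+1)\kappa$. The main product is controlled in two moves: Lemma~\ref{lem3.0} removes the shift $z$ up to a factor $1+O((\log n)/n)$, and then Lemma~\ref{lem3.1}, applied after reindexing by $\tilde{k}=m+1-k$ (so that $j^*_k = n-2s(m+1-k)$), compares $\prod_k (n^2-j_k^2)^{-1}$ with $h_1(\xi^\tau)$ and produces the algebraic gain $Cn^{1-1/s}$. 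This is exactly (\ref{14.5}), which feeds into (\ref{4.14.5}).

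Next I would control the remainder sum $\sum_{x\in X_n(\kappa,\tau)}\prod_{i\in I(x)} |n^2-i^2+z|^{-1}$. The decisive input is the injectivity of $\Phi_\kappa$ (Lemma~\ref{lem4.2}): since $x\mapsto(\xi^\tau, I(x))$ is one-to-one, distinct paths in $X_n(\kappa,\tau)$ produce distinct tuples $I(x)=(i_1,\ldots,i_\rho)$. Hence the sum over paths is dominated by the \emph{free} sum over all $\rho$-tuples of integers $\neq\pm n$; using $|n^2-i^2+z|\geq \frac{1}{2}|n^2-i^2|$ for $|z|\leq 1$, this free sum factorizes into $\rho$ copies of $\sum_{i\neq\pm n} 2/|n^2-i^2| \leq (C\log n)/n$, giving the bound $((C\log n)/n)^{(s+1)\kappa}$. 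Summing (\ref{4.14.5}) over $\tau=1,\ldots,m+1$ and using $\sum_\tau |h_1(\xi^\tau)| = H^-+H^+$ (which follows from (\ref{4.4.5}) and (\ref{4.6.0}): the base paths $\xi^1,\xi^{m+1}$ contribute $H^-$ and the remaining ones contribute $H^+$) then delivers (\ref{4.14.6}).

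The step I expect to be the crux is the passage from the path sum to the factorized free sum in the remainder estimate. A naive bound — the cardinality of $X_n(\kappa,\tau)$ times a single worst-case product — would destroy the smallness $((\log n)/n)^{(s+1)\kappa}$ that makes the $\kappa$-series summable and ultimately negligible against $H^-+H^+$. Avoiding this overcounting is precisely what the combinatorial Lemma~\ref{lem4.2} secures, so once injectivity is available the remainder of the argument is routine bookkeeping around the two product estimates (\ref{14.5}) and the geometric factorization.
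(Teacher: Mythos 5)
Your proposal is correct and follows essentially the same route as the paper: the disjoint decomposition $X_n(\kappa)=\bigcup_{\tau}X_n(\kappa,\tau)$, the factorization (\ref{14.4}) controlled by Lemmas~\ref{lem3.0} and~\ref{lem3.1} to get (\ref{14.5}), and the injectivity of $\Phi_\kappa$ (Lemma~\ref{lem4.2}) to dominate the remainder sum by the factorized free sum $\left(\frac{C\log n}{n}\right)^{(s+1)\kappa}$. You have also correctly identified the injectivity step as the point where a naive cardinality bound would fail, which is exactly the role Lemma~\ref{lem4.2} plays in the paper.
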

\bigskip

4. Now we are going to show that the main term of the asymptotics of
$\beta_n^+ (z), \;|z|\leq 1, $ is given by $H^+ - H^-.$ First we
prove the following.
\begin{Lemma}
\label{lem4.3} In the above notations, for $n=sm-1,$ we have
\begin{equation}
\label{eq4.3} \sum_{x \in X_n\setminus X_n(0)} |h (x,z)|  \leq C(a,b)
\frac{\log n}{n^{1/s}} \left ( \frac{\log n }{n}  \right )^s  (H^-(m)
+ H^+ (m)).
\end{equation}
\end{Lemma}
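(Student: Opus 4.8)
The plan is to obtain Lemma~\ref{lem4.3} as a direct consequence of Lemma~\ref{lem14.1}. Since $X_n \setminus X_n(0) = \bigcup_{\kappa \geq 1} X_n(\kappa)$ is a disjoint union, I would estimate each layer $X_n(\kappa)$ by the bound (\ref{4.14.6}) for the $h_1$-sums, restore the Fourier coefficients to pass from $h_1$ to $h$, and then sum the resulting series over $\kappa \geq 1$.

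First I would restore the coefficients. A walk $x \in X_n(\kappa)$ has $p = 1 + s\kappa$ steps equal to $-2$ and $q = m + \kappa$ steps equal to $2s$ (see (\ref{4.3.1})), so $h(x,z) = a^{p} b^{q} h_1(x,z)$ and hence
\[
\sum_{x \in X_n(\kappa)} |h(x,z)| = |a|^{1+s\kappa}\, |b|^{m+\kappa} \sum_{x \in X_n(\kappa)} |h_1(x,z)|.
\]
The useful algebraic observation is the factorization $|a|^{1+s\kappa}|b|^{m+\kappa} = |a|\,|b|^{m}\,(|a|^{s}|b|)^{\kappa}$: the $m$-dependent factor $|a|\,|b|^{m}$ is exactly the one relating the normalization of $H^\pm$ to the full $h$-sums over $X_n(0)$, namely $\sum_{\xi \in X_n(0)} |h(\xi)| = |a|\,|b|^{m}(H^- + H^+)$, while the remaining factor $(|a|^{s}|b|)^{\kappa}$ does not depend on $m$.

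Combining this with (\ref{4.14.6}) gives, for $|z|\leq 1$,
\[
\sum_{x \in X_n(\kappa)} |h(x,z)| \leq C\,|a|\,|b|^{m}(H^- + H^+)\, n^{1-\frac1s}\, \Bigl( |a|^{s}|b|\,(C\log n/n)^{s+1}\Bigr)^{\kappa}.
\]
I would then sum over $\kappa \geq 1$. For $n$ large the base $|a|^{s}|b|\,(C\log n/n)^{s+1}$ of this geometric series is $<1/2$, so the series is dominated by its $\kappa=1$ term up to a factor $2$, yielding
\[
\sum_{x \in X_n \setminus X_n(0)} |h(x,z)| \leq C(a,b)\,|a|\,|b|^{m}(H^- + H^+)\, n^{1-\frac1s}(\log n/n)^{s+1};
\]
the exponent identity $n^{1-1/s}(\log n/n)^{s+1} = \dfrac{\log n}{n^{1/s}}\Bigl(\dfrac{\log n}{n}\Bigr)^{s}$ reproduces exactly the power of $n$ (and of $\log n$) in (\ref{eq4.3}).

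The argument is thus essentially bookkeeping once Lemma~\ref{lem14.1} is available, and there is no single hard step; the substantive work, namely the injectivity of $\Phi_\kappa$ in Lemma~\ref{lem4.2} and the per-layer estimate in Lemma~\ref{lem14.1}, has already been carried out. The two points that require care are: (i) keeping the coefficient factors aligned with the normalization of $H^\pm$, so that only the $m$-independent constant $|a|^{s}|b|$ arising from $\kappa=1$ need be absorbed into $C(a,b)$; and (ii) checking that the $\kappa=1$ layer dominates, since this is what fixes the exponent $s+1$ of $\log n/n$ and hence the final order $n^{-s-1/s}$ on the right-hand side of (\ref{eq4.3}).
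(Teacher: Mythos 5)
Your proposal is correct and follows essentially the same route as the paper: write $h(x,z)=ab^m(a^sb)^\kappa h_1(x,z)$ on the layer $X_n(\kappa)$, apply Lemma~\ref{lem14.1}, and sum the resulting geometric series in $\kappa$, which for large $n$ is dominated by its $\kappa=1$ term. Your final bound, carrying the explicit factor $|a|\,|b|^m(H^-+H^+)$, matches the last display of the paper's own proof, and your exponent check $n^{1-1/s}(\log n/n)^{s+1}=\frac{\log n}{n^{1/s}}\left(\frac{\log n}{n}\right)^{s}$ is exactly the bookkeeping the paper leaves implicit.
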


\begin{proof}
If $x \in X_n(\kappa), $ then $\nu (x) = p+ q $ with $p = 1 + s
\kappa,$ and $ q = m+ \kappa, $ so
$$
h(x,z) = a^p b^q h_1 (x,z) = ab^m (a^s b)^\kappa  h_1 (x,z).
$$
By (\ref{4.14.6}) it follows
$$
\sum_{x \in X_n(\kappa)} |h (x,z)| \leq  C |a||b|^m n^{1-\frac{1}{s}}
(D (n))^\kappa
 \left ( H^-(m) + H^+(m)  \right ),
$$
with $ D (n) = |a^s b| \left ( \frac{C \log n }{n}  \right )^{s+1}. $
For large enough $n$ we have $ D (n) < 1/2, $ so
$$
\sum_{\kappa=1}^\infty \sum_{x \in X_n(\kappa)} |h (x,z)| \leq C
|a||b|^m n^{1-\frac{1}{s}} D (n)
 \left ( H^-(m) + H^+(m)  \right ),
 $$
which completes the proof.

\end{proof}

Since
$$ \beta_n^+ (z) = \sum_{x \in X_n(0)} h (x,z)+ \sum_{x \in X_n\setminus X_n(0)} h
(x,z),$$  Lemma~\ref{lem4.1} and Lemma~\ref{lem4.3} lead to the
following.
\begin{Proposition}  In the above notations,
for $n=sm-1,$ we have
\begin{equation}
\label{4.18.1} \beta_n^+ (z) = \beta_n^+ (0) [1+ O ((\log n)/n)],
\end{equation}
where
\begin{equation}
\label{4.18.2} \beta_n^+ (0) = \frac{-2sa b^m}{(2s)^{2m} m!}\,
\frac{\Gamma^2 (1-\frac{1}{s})\Gamma (m-\frac{2}{s})}{\Gamma^2
(m-\frac{1}{s})\Gamma (1-\frac{2}{s})} \left ( 1 +
  O \left ( (\log n)^{s+1}/n^s \right )    \right ).
\end{equation}
\end{Proposition}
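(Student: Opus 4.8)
The plan is to split $\beta_n^+(z)$ along the decomposition $X_n = X_n(0) \cup (X_n \setminus X_n(0))$ already set up, and to show that the entire leading behaviour is carried by the $m+1$ paths in $X_n(0)$, everything else being a genuinely lower‑order remainder. First I would record that every path in $X_n(0)$ has $p=1,\ q=m$, hence $h(x,z) = a b^m h_1(x,z)$; summing over $X_n(0)$ and invoking (\ref{49.4}) gives
$$\sum_{x\in X_n(0)} h(x,z) = a b^m \left(H^+(m)-H^-(m)\right)\left[1 + O\left((\log n)/n\right)\right], \qquad |z|\le 1,$$
and at $z=0$ the identity is exact, since the $\theta(\xi,z)$ corrections of (\ref{49.2})--(\ref{49.3}) vanish, so that $\sum_{x\in X_n(0)} h(x,0) = ab^m\left(H^+(m)-H^-(m)\right)$.

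Next I would identify this combinatorial leading term with the stated Gamma‑function expression. By (\ref{4.7.2}) one has $H^+ - H^- = -H^-\, r(m)$ with $r(m)$ as in (\ref{4.7.3}); substituting the closed form (\ref{4.4.5a}) for $H^-(m)$ and the expression (\ref{4.7.3}) for $r(m)$ (with $\alpha=1/s$) yields
$$ab^m\left(H^+(m)-H^-(m)\right) = \frac{-2s\,ab^m}{(2s)^{2m}\,m!}\,\frac{\Gamma^2(1-\tfrac1s)\,\Gamma(m-\tfrac2s)}{\Gamma^2(m-\tfrac1s)\,\Gamma(1-\tfrac2s)},$$
which is exactly the leading factor of (\ref{4.18.2}). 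It then remains to control the remainder $\sum_{x\in X_n\setminus X_n(0)} h(x,z)$, for which Lemma~\ref{lem4.3} (in the form produced by its proof) supplies an absolute bound of order $|a|\,|b|^m(\log n)^{s+1}n^{-s-1/s}(H^-+H^+)$. The decisive step is to compare this with the size of the leading term: by Lemma~\ref{lem4.1} the near‑cancellation in $H^+-H^-$ is not too severe, namely $|H^+-H^-| = H^-r(m) \gtrsim m^{-1/s}(H^-+H^+)$, and since $n=sm-1\asymp m$ we have $m^{-1/s}\asymp n^{-1/s}$. Dividing the remainder bound by $|ab^m(H^+-H^-)|\asymp |a|\,|b|^m n^{-1/s}(H^-+H^+)$ therefore leaves a relative error of order $(\log n)^{s+1}n^{-s}$.

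Assembling the pieces gives $\beta_n^+(z) = ab^m(H^+-H^-)\,[1 + O((\log n)/n) + O((\log n)^{s+1}/n^{s})]$, and since $s>2$ the second error is swallowed by the first, which yields (\ref{4.18.1}); at $z=0$ only the remainder contributes an error, producing the relative $O((\log n)^{s+1}/n^s)$ in (\ref{4.18.2}). The one point that requires real care — and the heart of the matter — is the competition between the cancellation in the main term and the smallness of the remainder: the leading term is itself small, reduced by a factor $\asymp n^{-1/s}$ through the near‑equality of $H^+$ and $H^-$, so the asymptotics would collapse if the $\kappa\ge 1$ paths contributed at the same scale. That they do not is precisely the content of the extra gain $n^{-s}$ in Lemma~\ref{lem4.3} balanced against the lower bound of Lemma~\ref{lem4.1}, and confirming that these two estimates fit together with room to spare is the step I would verify most carefully.
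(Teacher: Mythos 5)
Your proposal is correct and follows essentially the same route as the paper: split off $X_n(0)$, use (\ref{49.4}) for the main term, bound the tail by Lemma~\ref{lem4.3}, compare the two via the lower bound of Lemma~\ref{lem4.1}, and convert $ab^m(H^+-H^-)$ into the Gamma-function form via (\ref{4.4.5a}) and (\ref{4.7.2}). Your explicit balancing of the $n^{-1/s}$ cancellation in $H^+-H^-$ against the $n^{-s-1/s}$ tail bound is exactly the (implicit) content of the paper's one-line deduction of (\ref{4.18.3}).
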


\begin{proof}
Indeed,  by (\ref{4.8.4}) one can easily see that  (\ref{4.18.1})
follows from (\ref{49.4}) and (\ref{eq4.3}).

To prove (\ref{4.18.2}), let us recall that
$$
\sum_{\xi \in X_n(0)} h(\xi,0) = H^+ (m) - H^-(m),
$$
so (\ref{eq4.3})  and (\ref{4.8.4}) imply that
\begin{equation}
\label{4.18.3} \beta_n^+ (0) = a b^m (H^+(m) - H^-(m) ) \left ( 1 +
  O \left ( (\log n)^{s+1}/n^s \right )    \right ).
\end{equation}
Therefore,  (\ref{4.18.2}) follows from (\ref{4.4.5a}) and
(\ref{4.7.2}), which completes the proof.
\end{proof}

\bigskip

5. Next we estimate $\beta_n^-(z) $ for $ |z|\leq 1$  -- compare
Lemma \ref{lem3.2} -- without any restriction like (\ref{3.4.4}) or
(\ref{4.1.1}) on $n.$ For every $n,$ if $y$ is a path from $+n$ to
$-n$ satisfying (\ref{4.2.2}) - (\ref{4.2.3}), then we have $- 2n
=-2p +2sq,$ i.e.,
\begin{equation}
\label{4.40} p = n+ sq.
\end{equation}
We define $Y_n(q) $ as the set of all paths (\ref{4.2.2}) with
parameters $p,q $ satisfying (\ref{4.40}). Then
\begin{equation}
\label{4.41} \# Y_n(0) = 1
\end{equation}
and the only path $\eta \in Y_n(0) $ is defined by
\begin{equation}
\label{4.42} \eta (t) = -2, \quad 1\leq t \leq n,
\end{equation}
so its vertices are
\begin{equation}
\label{4.43} j(t;\eta)= n-2t, \quad 0 \leq t \leq n.
\end{equation}
Therefore,
\begin{equation}
\label{4.44} h_1 (\eta) = \prod_{t=1}^{n-1} [n^2 - (n-2t)^2]^{-1}=
\frac{1}{4^{n-1} [(n-1)!]^2}
\end{equation}
and, due to Lemma \ref{lem3.0},
\begin{equation}
\label{4.440} h_1 (\eta,z) = \prod_{t=1}^{n-1} [n^2 -
(n-2t)^2+z]^{-1}= h_1 (\eta) \, [1+O((\log n)/n)].
\end{equation}

If $q \geq 1,$ then any path $y \in Y_n(q) $ has a sub-path with $s+1
$ steps of the form $(2s, -2, \ldots, -2).$  Indeed, choose
\begin{equation}
\label{4.45} t^* = \max\{t: \; y(t)= 2s\};
\end{equation}
then $t^* \leq \nu (y) -s-1,$ and
\begin{equation}
\label{4.46} y(t)= -2, \quad t^*+1 \leq t \leq t^* +s.
\end{equation}
Now define a new path $\tilde{y} \in Y(q-1)$ by
\begin{equation}
\label{4.47}
\tilde{y}(t) = \begin{cases} y(t),  &  1 \leq t < t^*,\\
y(t+1+s), &   t^* \leq t\leq \nu (y) -s.
\end{cases}
\end{equation}
Then
\begin{equation}
\label{4.48} h_1 (y,z) = h_1 (\tilde{y},z) \cdot \prod_{t=t^*}^{t^*
+s} [n^2 - (n-j^2 (t,y))^2+z]^{-1},
\end{equation}
so
$$
|h_1 (y,z) | \leq (2n)^{-(s+1)} |h_1 (\tilde{y},z)| \quad \text{for}
\;\; |z|\leq 1.
$$
After $q$ such restructuring we come, in view of (\ref{4.440}), to
the inequality
\begin{equation}
\label{4.50} |h_1 (y,z) | \leq 2(2n)^{-q(s+1)} |h_1 (\eta)|, \quad
|z|\leq 1, \; n>N_1.
\end{equation}
If $T=\max\{|a|, |b|\},$ then -- compare (\ref{3.8.4}) -
(\ref{3.9.1}) -- for $y\in Y_n(q) $ it follows from (\ref{4.50}) that
\begin{equation}
\label{4.51} |h(y,z)| = |a^{n+qs} b^q||h_1 (y,z)| \leq
\frac{2T^{q(s+1)}}{(2n)^{q(s+1)}} |a^nh_1 (\eta)| = 2|h(\eta)| \left
(\frac{T}{2n}\right )^{q(s+1)}.
\end{equation}

As in (\ref{3.10.1}), now we can claim that
\begin{equation}
\label{4.52} \# Y_n(q) \leq \begin{pmatrix}  p+q\\q \end{pmatrix} =
\begin{pmatrix}  n+q(s+1)\\ q \end{pmatrix}  \leq \begin{cases}
\frac{1}{q!}(s+2)^q n^q &   \text{if} \; q < n,\\
2^{(s+2)q}  &   \text{if} \; q \geq n.
\end{cases}
\end{equation}

Therefore, by (\ref{4.51}) and (\ref{4.52}) we obtain
\begin{equation}
\label{4.53} \sum_{q\geq 1}\sum_{y\in Y_n(q)} |h(y,z)| \leq
2|h(\eta)| (\sigma_1 + \sigma_2),
\end{equation}
where for large enough $n$
\begin{equation}
\label{4.54} \sigma_1 = \sum_{q=1}^{n-1} \frac{(s+2)^q n^q}{q!}
\left ( \frac{T}{2n} \right )^{q(s+1)} \leq C_1 n^{-s},
\end{equation}
and
\begin{equation}
\label{4.55} \sigma_2 = \sum_{q=n}^{\infty} 2^{q(s+2)}\left (
\frac{T}{2n} \right )^{q(s+1)} \leq 2^q \left ( \frac{T}{n} \right
)^{n(s+1)}.
\end{equation}
Certainly, the inequalities (\ref{4.52}) - (\ref{4.55}) imply
\begin{equation}
\label{4.56} \sum_{Y_n\setminus \{\eta\}} |h(y,z)| \leq
\frac{C}{n^{s}}|h(\eta)|, \quad |z|\leq 1.
\end{equation}

\begin{Proposition}
\label{prop14} In the above notations,
\begin{equation}
\label{4.57}  \beta_n^- (z)  = \beta_n^- (0) (1+O((\log n)/n)),
\end{equation}
where
\begin{equation}
\label{4.58} \beta^-_n (0)  = \frac{a^n}{4^{n-1} [(n-1)!]^2}
(1+O(1/n^s)).
\end{equation}
\end{Proposition}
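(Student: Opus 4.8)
The plan is to isolate the single dominant walk $\eta$ and treat every other walk as a controlled remainder, using the estimates already prepared in (\ref{4.44})--(\ref{4.56}). The key structural simplification here, compared with the analysis of $\beta_n^+(z),$ is that $Y_n(0)=\{\eta\}$ is a \emph{singleton}: there is no analogue of the delicate $H^+-H^-$ cancellation that governed the $\beta_n^+$ case, so the leading asymptotics comes directly from one term. First I would split
\[
\beta_n^-(z) = h(\eta,z) + \sum_{y\in Y_n \setminus \{\eta\}} h(y,z),
\]
and observe that, since $\eta$ consists of $n$ steps all equal to $-2$ (so $p=n,$ $q=0$ in (\ref{4.40})), one has $h(\eta,z)=a^n h_1(\eta,z).$

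For the leading term I would invoke (\ref{4.440}), which gives $h_1(\eta,z)=h_1(\eta)\,[1+O((\log n)/n)];$ together with (\ref{4.44}) this yields
\[
h(\eta,z)=h(\eta)\,[1+O((\log n)/n)], \qquad h(\eta)=\frac{a^n}{4^{n-1}[(n-1)!]^2}.
\]
For the remainder, the estimate (\ref{4.56}) already bounds $\sum_{y\in Y_n\setminus\{\eta\}} |h(y,z)| \leq (C/n^s)\,|h(\eta)|$ uniformly for $|z|\leq 1.$ Specializing to $z=0,$ where the logarithmic factor coming from (\ref{4.440}) is absent, immediately gives
\[
\beta_n^-(0)=h(\eta)\,[1+O(n^{-s})],
\]
which is exactly (\ref{4.58}).

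Finally, for (\ref{4.57}) I would combine the two displays: for $|z|\leq 1,$
\[
\beta_n^-(z)=h(\eta)\,[1+O((\log n)/n)]+O\!\left(n^{-s}|h(\eta)|\right)=h(\eta)\,[1+O((\log n)/n)],
\]
absorbing the $O(n^{-s})$ contribution into the $O((\log n)/n)$ one since $s>1.$ Dividing by the expression for $\beta_n^-(0)$ and again using $n^{-s}=o((\log n)/n)$ produces $\beta_n^-(z)/\beta_n^-(0)=1+O((\log n)/n),$ which is (\ref{4.57}). I do not expect any genuine obstacle at this final stage: the entire nontrivial content is the path-restructuring argument (\ref{4.45})--(\ref{4.50}) together with the counting bound (\ref{4.52}), which already feed into (\ref{4.56}); granting those, the proposition reduces to this short bookkeeping.
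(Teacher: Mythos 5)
Your proposal is correct and follows essentially the same route as the paper: the paper's own (very terse) proof likewise derives (\ref{4.58}) from (\ref{4.44}) and (\ref{4.56}), and (\ref{4.57}) from those together with (\ref{4.440}), i.e., by isolating the single walk $\eta$ in $Y_n(0)$ and treating everything else via the already-established remainder bound. Your added remarks — that $Y_n(0)$ being a singleton removes any cancellation issue, and that $O(n^{-s})$ is absorbed into $O((\log n)/n)$ since $s\geq 3$ — are exactly the implicit bookkeeping the paper leaves to the reader.
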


\begin{proof}
Indeed, (\ref{4.58}) follows from (\ref{4.44}) and (\ref{4.56}), and
(\ref{4.57}) follows from (\ref{4.44}), (\ref{4.58}), (\ref{4.440})
and (\ref{4.56}).
\end{proof}

\begin{Theorem}
\label{thm5} For any potential of the form
\begin{equation}
v(x) = ae^{-2ix}+be^{2isx},\quad  a,b \neq 0, \; s\geq 3,
\end{equation}
there is no basis consisting of root functions of $L_{Per^-}(v).$
\end{Theorem}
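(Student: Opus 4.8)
The plan is to apply Criterion~\ref{crit1} to the set $\Delta$ in (\ref{4.1.1}), with $\Delta_0=\emptyset$ and $\Delta_1=\Delta$, and to show that the ratio $t_n(0)$ of (\ref{a4}) is \emph{unbounded} along $\Delta$. Part (b) of the criterion then denies a Riesz basis in $E(\Delta)$, and by Remark~\ref{rem1} this rules out any basis of $E(\Delta)$, hence the full system of root functions cannot be a basis of $L^2([0,\pi])$. Concretely, since $t_n=\max\{|\beta_n^-|/|\beta_n^+|,\ |\beta_n^+|/|\beta_n^-|\}$, it suffices to prove
\begin{equation}
\limsup_{n\in\Delta}\ \frac{|\beta_n^+(0)|}{|\beta_n^-(0)|}=\infty ,
\end{equation}
i.e. that $\beta_n^-(0)/\beta_n^+(0)\to 0$ as $m\to\infty$.

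First I would dispose of the parity bookkeeping. Since $bc$ is antiperiodic, Criterion~\ref{crit1} requires $\Delta\subset\Gamma^-=2\mathbb{N}-1$: if $s$ is even every $n=sm-1$ is odd, while if $s$ is odd one passes to the infinite subset $\{sm-1:\ m\ \text{even}\}$, along which $m\to\infty$ still holds, so the asymptotics below are unaffected. The hypotheses (\ref{a1}) and (\ref{a2}) are then read off from the two Propositions already established: (\ref{4.18.2}) and (\ref{4.58}) exhibit explicit nonvanishing leading terms for $\beta_n^+(0)$ and $\beta_n^-(0)$, giving (\ref{a1}), while (\ref{4.18.1}) and (\ref{4.57}) yield $\beta_n^\pm(z)=\beta_n^\pm(0)\,(1+O((\log n)/n))$ uniformly for $|z|\le 1$, which for large $n$ furnishes (\ref{a2}) with a constant $c$ arbitrarily close to $1$.

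The crux is the asymptotic evaluation of this ratio. Substituting (\ref{4.58}) and (\ref{4.18.2}) with $n=sm-1$, the exponential prefactors $|a|^{sm}$, $|b|^m$, $4^{-sm}$, $(2s)^{-2m}$ together with the fixed Gamma constants contribute only a factor whose logarithm is $O(m)$, so the divergence is governed by the factorials. Using $\Gamma(m-c)\asymp (m-1)!\,m^{-c}$, the factorial content of $\beta_n^-(0)$ is $\asymp [(sm-2)!]^{-2}$, while that of $\beta_n^+(0)$, namely $\Gamma(m-\tfrac{2}{s})/\big(m!\,\Gamma^2(m-\tfrac{1}{s})\big)$, is $\asymp [\,m!\,(m-1)!\,]^{-1}$. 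Hence by Stirling
\begin{equation}
\log\frac{|\beta_n^-(0)|}{|\beta_n^+(0)|}
= 2\log(m!)-2\log\big((sm)!\big)+O(m)
= 2(1-s)\,m\log m + O(m),
\end{equation}
and since $s\ge 3$ the coefficient $2(1-s)\le -4$ is strictly negative, so the right-hand side tends to $-\infty$. Thus $\beta_n^-(0)/\beta_n^+(0)\to 0$, equivalently $t_n(0)\to\infty$ along $\Delta$.

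Consequently (\ref{a3}) fails along $\Delta$, and Criterion~\ref{crit1}(b) shows that the root functions of $L_{Per^-}(v)$ contain no Riesz basis of $E(\Delta)$; by Remark~\ref{rem1} they contain no basis of $E(\Delta)$, so the system of root functions is not a basis of $L^2([0,\pi])$, which is the assertion of the theorem. I expect the only genuine work to be the Stirling bookkeeping in the displayed estimate—everything else is assembly of (\ref{4.18.1})--(\ref{4.58}) and Criterion~\ref{crit1}. The one point demanding care is confirming that the exponential factors $|a|^{sm}$, $|b|^m$, $4^{-sm}$, $(2s)^{-2m}$ stay lower order than the $m\log m$ term, which holds because each of their logarithms is $O(m)$; note also that $s>2$ is exactly the condition (\ref{4.6.10}) under which the Propositions were derived, so the hypothesis $s\ge 3$ is used both there and here.
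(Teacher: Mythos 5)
Your proposal is correct and follows essentially the same route as the paper: apply Criterion~\ref{crit1} to $\Delta=\{sm-1\}$ (restricting to even $m$ when $s$ is odd to stay in $\Gamma^-$), read hypotheses (\ref{a1})--(\ref{a2}) off the two Propositions, and use Stirling to show $|\beta_n^-(0)|/|\beta_n^+(0)|\le C^m m^{2(1-s)m}\to 0$, so that (\ref{a3}) fails. Your Stirling bookkeeping matches the paper's estimate (\ref{4.59}), and your explicit parity discussion only makes precise what the paper notes at the start of Section~3.
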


\begin{proof}
In view of (\ref{4.18.1}), (\ref{4.18.2}) and (\ref{4.57}), we may
apply Criterion~\ref{crit1} to the set $\Delta= \{n= sm-1, \; m \in
\mathbb{N}\}.$     By (\ref{4.18.2}), (\ref{4.57}) and the Stirling
formula, we have
\begin{equation}
\label{4.59} |\beta^-_n (0)|/|\beta^+_n (0)|  \leq C_1^n \left (
m!/n! \right )^2  \leq C_2^m m^{2(1-s)m} \to 0, \quad n \in \Delta.
\end{equation}
Hence, Criterion~\ref{crit1} implies that there is no basis
consisting of root functions of $L_{Per^-}(v).$
\end{proof}

\section{Comments}

Theorems \ref{thm3.1} and \ref{thm5} claim divergence of spectral
decompositions in the case of potentials of the form
\begin{equation}
\label{5.1} v(x) = a e^{-2iRx} + b e^{2iSx}
\end{equation}
for many pairs $R, S$ such that $R\neq S.$

If $R=S$  the picture is much simpler; it is similar to the case
$R=S=1$ which is analyzed in  \cite{DM25}, see Theorem~7 in
Section~3 there.

If $R=S>1,$ then an admissible path $x$ from $-n$ to $n $ (or from
$n$ to $-n$) gives a nonzero term $h(x,z)$ of $\beta^+_n (z)$ if and
only if $x(t)= \pm 2R.$ Let $p$  and $q $ be, respectively, the
number of steps equal to $-2R$  and $2R.$ Then -- compare
(\ref{3.3.2}) - (\ref{3.4.6}) --
\begin{equation}
\label{5.2} 2n= - 2Rp+2Rq = 2R(p+q),
\end{equation}
so
\begin{equation}
\label{5.3} \beta_n^- (z) =0, \quad  \beta_n^+ (z) =0 \quad
\text{if} \quad n\not \equiv 0 \mod R,
\end{equation}
Choose $N $ so large that (\ref{2.7}) holds and the claim of
Lemma~\ref{lem1} is valid for $n>N.$  Set
\begin{equation}
\label{5.4}   \Delta_0^\pm = \{n\in \Gamma^\pm: n>N, \;  n\not \equiv
0 \mod R\}
\end{equation}
and let $E(\Delta_0^\pm)= Ran \, P_{\Delta_0^\pm},$ where $P_\Delta$
is the projection defined by (\ref{3.1.1})). Then, in view of
(\ref{5.3}), Criterion~\ref{crit1} implies that $E(\Delta^+_0) $
(respectively $E(\Delta^-_0) $)  has a basis consisting of periodic
(antiperiodic) root functions. In particular, this holds for the set
$\Delta_0 $ defined by (\ref{3.3.1}).

On the other hand, let us consider the set $$\Delta^\pm_1=\{n\in
\Gamma^\pm, \; n=R\,m, \; m\geq N \}.$$ By Criterion~\ref{crit1} the
system of root functions of $L_{Per^\pm} $ contains a (Riesz) basis
in $L^2([0,\pi])$  if and only if $E(\Delta^\pm_1)$ has a basis
consisting of periodic (respectively antiperiodic) root functions.

One can show using the same argument as in \cite[Section 3]{DM25}
(see Lemmas 3 and 4, and Propositions 5 and 6 there) that if $n \in
\Delta_1^\pm, $ then
\begin{equation}
\label{5.5} \beta_n^+ (z) = 4R^2 \left ( \frac{b}{4R^2}\right )^m
\frac{1}{[(m-1)!]^2} \left ( 1+O \left (\frac{\log n}{n} \right )
\right ), \;\; |z| \leq 1,
\end{equation}
\begin{equation}
\label{5.6} \beta_n^- (z) = 4R^2 \left ( \frac{a}{4R^2}\right )^m
\frac{1}{[(m-1)!]^2} \left ( 1+O \left (\frac{\log n}{n} \right )
\right ), \; \; |z| \leq 1.
\end{equation}
Now Criterion~\ref{crit1} says when  $E(\Delta_1^\pm) $ has a basis
consisting of root functions, which leads to the following
generalization of Theorem~7 in \cite{DM25}.
\begin{Proposition}
\label{prop20} If $R$ is even, then a root function system of the
operator
\begin{equation}
\label{5.7}   L = -\frac{d^2}{dx^2} + ae^{-2iRx} + b e^{2iRx},
\end{equation}
considered with antiperiodic boundary conditions, contains a Riesz
basis in $L^2 ([0,\pi]).$

If $R$ is odd and $L$ is considered with antiperiodic boundary
conditions, or $R$ is arbitrary and $L$ is considered with periodic
boundary conditions, then the system of root functions of the
operator $L$ contains a Riesz basis in $L^2 ([0,\pi])$ if and only if
\begin{equation}
\label{5.8} |a|=|b|.
\end{equation}
\end{Proposition}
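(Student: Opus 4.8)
The plan is to reduce the whole statement to the behaviour of the functionals $\beta_n^\pm$ on the arithmetic progression $n=Rm$ and then read the answer off Criterion~\ref{crit1}. First I would record that, as noted just before the statement, $\Gamma^\pm$ splits (up to finitely many indices) into $\Delta_0^\pm\sqcup\Delta_1^\pm$, where $\Delta_0^\pm$ collects the $n\not\equiv 0\mod R$ and $\Delta_1^\pm$ the $n\equiv 0\mod R$. By (\ref{5.3}) we have $\beta_n^+(z)\equiv\beta_n^-(z)\equiv 0$ on $\Delta_0^\pm$, so (\ref{a0}) holds there; by (\ref{5.5}) and (\ref{5.6}) together with $a,b\neq 0$ we have $\beta_n^\pm(0)\neq 0$ on $\Delta_1^\pm$, and since those asymptotics carry the same factor $1+O((\log n)/n)$ uniformly for $|z|\leq 1$, conditions (\ref{a1}) and (\ref{a2}) hold on $\Delta_1^\pm$ as well. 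Hence Criterion~\ref{crit1} applies to $\Delta=\Gamma^\pm$, and the root function system contains a Riesz basis in $L^2([0,\pi])$ if and only if $\limsup_{n\in\Delta_1^\pm}t_n(0)<\infty$.

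The second step is a parity count deciding whether $\Delta_1^\pm$ is empty or infinite, and this is where the even/odd dichotomy in the statement originates. Membership $n\in\Delta_1^\pm$ requires $R\mid n$ together with $n\in\Gamma^\pm$. For periodic $bc$, $\Gamma^+$ is the even integers: if $R$ is even then $R\mid n$ already forces $n$ even, and if $R$ is odd then $R\mid n$ and $2\mid n$ together mean $2R\mid n$; either way $\Delta_1^+$ is infinite. For antiperiodic $bc$, $\Gamma^-$ is the odd integers: if $R$ is odd then $n=Rm$ is odd precisely for odd $m$, so $\Delta_1^-$ is infinite, whereas if $R$ is even then every $n=Rm$ is even and $\Delta_1^-=\emptyset$.

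The third step dispatches the two resulting cases. When $\Delta_1^\pm=\emptyset$ (that is, $R$ even with antiperiodic $bc$) the condition $\limsup_{n\in\Delta_1^-}t_n(0)<\infty$ is vacuous---equivalently $E(\Delta_1^-)=\{0\}$, or, by part (a) of Criterion~\ref{crit1}, every large antiperiodic eigenvalue is double of geometric multiplicity $2$, so the two-dimensional eigenspaces already furnish a Riesz basis---which gives the first assertion. When $\Delta_1^\pm$ is infinite I would insert (\ref{5.5}) and (\ref{5.6}) into (\ref{a4}): with $n=Rm$ the common prefactor cancels and
\begin{equation*}
\frac{|\beta_n^-(0)|}{|\beta_n^+(0)|}=\left(\frac{|a|}{|b|}\right)^{m}(1+o(1)),\qquad
t_n(0)=\max\!\left\{\left(\tfrac{|a|}{|b|}\right)^{m},\left(\tfrac{|b|}{|a|}\right)^{m}\right\}(1+o(1)).
\end{equation*}
If $|a|=|b|$ then $t_n(0)\to 1$, the $\limsup$ is finite, and a Riesz basis exists; if $|a|\neq|b|$ then one ratio grows geometrically in $m$, so $t_n(0)\to\infty$ and no basis exists. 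This is precisely the dichotomy (\ref{5.8}) for the odd-antiperiodic and all-periodic cases.

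The genuine analytic content is not in this bookkeeping but in the asymptotic formulas (\ref{5.5}) and (\ref{5.6}) themselves, whose derivation would repeat the combinatorial path estimates of Sections~2 and~3 (the instance $R=S=1$ being Theorem~7 of \cite{DM25}); I expect establishing those to be the main obstacle. Granting them, the only remaining delicate point is the parity count of the second step, since it is this count---rather than the size of $|a|/|b|$---that isolates the single exceptional regime ($R$ even, antiperiodic) in which an unconditional basis always exists.
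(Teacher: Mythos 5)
Your proposal is correct and follows essentially the same route as the paper: the paper likewise splits $\Gamma^\pm$ into the set where $\beta_n^\pm\equiv 0$ and the progression $\Delta_1^\pm=\{n=Rm\}$, computes the ratio $\beta_n^-/\beta_n^+$ from (\ref{5.5})--(\ref{5.6}) (your exponent $m$ is the right one, consistent with those formulas), and concludes via the same parity observation that $\Delta_1^-$ is empty exactly when $R$ is even. The only substantive ingredient in both treatments is the pair of asymptotics (\ref{5.5})--(\ref{5.6}), which the paper also takes from the argument of \cite{DM25} rather than rederiving.
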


\begin{proof}
By (\ref{5.5}) and (\ref{5.6}) we have
$$
\frac{\beta^-_n (z)}{\beta^+_n (z)} = \left (\frac{a}{b} \right )^n
\left ( 1+O \left (\frac{\log n}{n} \right ) \right ), \quad n\in
\Delta_1^\pm, \;\; |z| \leq 1.
$$
Then the assertion follows from the simple observation that
$\Delta_1^+ \cap 2\mathbb{N}$ is an infinite set for any $R$ but
$\Delta_1^- \cap (2\mathbb{N}-1)= \emptyset $ if $R$ is even and
$\Delta_1^- \cap (2\mathbb{N}-1) $ is infinite if $R$ is odd.
\end{proof}


\begin{thebibliography}{99}

\bibitem{BSWZ12} J. M. Borwein, A. Straub, J. Wan, W. Zudilin, with an
appendix by D. Zagier, Densities of Short Uniform Random Walks,
Canad. J. Math., 64, No. 5 (October 2012), 961 -- 990.


\bibitem{DM11} P. Djakov and B. Mityagin,
Asymptotics of instability zones of Hill operators with a two term
potential. C. R. Math. Acad. Sci. Paris {\bf 339} (2004), no. 5,
351--354.



\bibitem{DM15} P. Djakov and B. Mityagin,
Instability zones of periodic 1D Schr\"odinger and Dirac operators
(Russian), Uspehi Mat. Nauk {\bf 61} (2006), no 4, 77--182 (English:
Russian Math. Surveys {\bf 61} (2006), no 4, 663--766).



\bibitem{DM10} P. Djakov and B. Mityagin,
Asymptotics of instability zones of the Hill operator with a two term
potential. J. Funct. Anal.  {\bf 242}  (2007),  no. 1, 157--194.




\bibitem{DM19} P. Djakov and B. Mityagin, Bari-Markus property
for Riesz projections of Hill operators with singular potentials,
Contemporary Mathematics {\bf 481} (2009) , 59--80, AMS, Functional
Analysis and Complex Analysis, editors A. Aytuna, R. Meize, T.
Terzioglu, D. Vogt.


\bibitem{DM25a} P. Djakov and B. Mityagin,
Convergence of spectral decompositions of Hill operators with
trigonometric polynomial  potentials, Doklady Math. 83, 5--7 (2011).


\bibitem{DM25} P. Djakov and B. Mityagin,
Convergence of spectral decompositions of Hill operators with
trigonometric polynomial potentials, Math. Annalen 351 (2011),
509--540.


\bibitem{DM26-2} P. Djakov and B. Mityagin,
Riesz bases consisting of root functions of 1D Dirac operators,
arXiv:1108.4225, to appear in Proc. Amer. Math. Soc.


\bibitem{DM28} P. Djakov and B. Mityagin,
Criteria for existence of Riesz bases consisting of root functions
of Hill and 1D Dirac operators, arXiv:1106.5774,  J. Funct. Anal.
263 (2012), 2300--2332.

\bibitem{DM30} P. Djakov and B. Mityagin, Combinatorial identities
related to  eigenfunction decompositions of Hill operators: Open
Questions, arXiv:1210.0835


\bibitem{Du58}  N. Dunford,  A survey of the theory of spectral operators,
 Bull.  Amer. Math. Soc. {\bf 64} (1958), 217--274.


\bibitem{DS71}  N. Dunford, J. Schwartz, Linear Operators, Part III, Spectral
Operators, Wiley, New York, 1971.


\bibitem{GT12}  F. Gesztesy and V. Tkachenko,
A Schauder and Riesz basis criterion for non-self-adjoint
Schr\"odinger operators with periodic and anti-periodic boundary
conditions, J. Differential Equations 253 (2012), 400--437.



\bibitem{Ke64} G. M. Keselman, On the unconditional convergence
of eigenfunction
expansions of certain differential operators, Izv. Vyssh. Uchebn.
Zaved. Mat. {\bf 39} (2) (1964), 82--93 (Russian).



\bibitem{MW69} W. Magnus and S. Winkler, "Hill equation",
Interscience Publishers,
John Wiley, 1969.

\bibitem{Ma06-1}  A. S. Makin, On the convergence of
expansions in root functions of
a periodic boundary value problem, Dokl. Akad. Nauk {\bf 406}
(2006), no. 4, 452--457.

\bibitem{Ma06-2} A. S. Makin, On the basis property of
system of root functions of
regular boundary value problems for the Sturm -- Liouville operator,
Differ. Uravn. {\bf 42} (2006), no. 12, 1646--1656, 1727 (russian):
English transl.: Differ. Equ. {\bf 42} (2006), no. 12, 1717--1728.


\bibitem{Mi62} V. P. Mikhailov, On Riesz bases in $L^2(0, 1),$
Dokl. Akad. Nauk SSSR {\bf 144} (1962), 981--984 (Russian).

\bibitem{Min99}  A. Minkin,
Equiconvergence theorems for differential operators. Functional
analysis, 4.  J. Math. Sci. (New York)  {\bf 96}  (1999),  no. 6,
3631--3715.

\bibitem{Min06}  A. Minkin,
Resolvent growth and Birkhoff-regularity.  J. Math. Anal. Appl.  {\bf
323}  (2006),  no. 1, 387--402.

\bibitem{Ro12} S. Rosenberg, On a Combinatorial Identity of Djakov and
Mityagin, arXiv:1205.6236, 28 May 2012.


\bibitem{SS03}  A. M. Savchuk and A. A. Shkalikov,
Sturm-Liouville operators with distribution potentials (Russian)
Trudy Mosk. Mat. Obs. {\bf 64} (2003), 159 -- 212; English transl.
in Trans. Moscow Math. Soc. 2003, 143--192.


\bibitem{Sh79}  A. A. Shkalikov, The basis property of eigenfunctions of an
ordinary differential operator, Uspekhi Mat, Nauk {\bf 34} (1979),
235--236 (Russian)

\bibitem{Sh82}  A. A. Shkalikov, On the basisness property of eigenfunctions
 of ordinary
differential operators with integral boundary conditions, Vestnik
Mosk. Univ., ser. 1, Math. \& Mech., {\bf 6} (1982), 12--21.

\bibitem{Sh83}  A. A. Shkalikov, Boundary value problems for
ordinary differential
equations with a parameter in the boundary conditions, Trudy Sem. I.
G. Petrovskogo {\bf 9} (1983), 190--229 (Russian); English transl.:
J. Sov. Math. {\bf 33 (6)} (1986), 1311--1342.


\bibitem{ShV09} O. A. Veliev and A. A. Shkalikov, On Riesz basisness
of eigenfunctions and associated functions of periodic and
anti-periodic Sturm -- Liouville problems, Matem. Zamet., {\bf 85},
no. 5 (2009), 671--686.


\end{thebibliography}
\end{document}